\newtheorem{theorem}{Theorem}[section]
\newtheorem{lemma}[theorem]{Lemma}
\newtheorem{proposition}[theorem]{Proposition}
\newtheorem{corollary}[theorem]{Corollary}
\def\fl#1{\smash{\mathop{\hbox to 11mm{ \rightarrowfill\ }}\limits^{\textstyle #1}}}
\newcommand{\proofof}[1]{\noindent{{\em Proof of #1.}}}
\newcommand{\fproofof}[1]{\noindent{{\em First proof of #1.}}}
\newcommand{\sproofof}[1]{\noindent{{\em Second proof of #1.}}}
\newcommand{\fin}{\hfill$\square$\bigskip}
\newcommand{\tr}[1]{\hspace{-0.5ex}\ ^t\hspace{-0.25ex}#1}
\DeclareMathOperator{\Tor}{Tor}
\newcommand{\Z}{\mathbb{Z}}
\newcommand{\Q}{\mathbb{Q}}
\newcommand{\M}{\mathcal{M}}
\newcommand{\iso}{\fl{\scriptstyle{\cong}}}
\renewcommand{\b}[1]{\boldsymbol{#1}}
\title{Realizing isomorphisms between first homology groups of closed 3-manifolds by borromean surgeries}
\author{Delphine Moussard\footnote{The author was supported by the Italian FIRB project 
"Geometry and topology of low-dimensional manifolds", RBFR10GHHH.}}
\date{}
\begin{document}

\maketitle

\begin{abstract}
We refine Matveev's result asserting that any two closed oriented 3-manifolds can be related by a sequence of 
borromean surgeries if and only if they have isomorphic first homology groups and linking pairings. Indeed, a borromean 
surgery induces a canonical isomorphism between the first homology groups of the involved 3-manifolds, 
which preserves the linking pairing. We prove that any such isomorphism is induced by a sequence of borromean surgeries. 
As an intermediate result, we prove that a given algebraic square finite presentation of the first homology group of a 3-manifold, 
which encodes the linking pairing, can always be obtained from a surgery presentation of the manifold. 
\vspace{1ex}

\noindent MSC: \textbf{57M27} 57M25 57N10 57N65

\noindent Keywords: Homology groups; linking pairing; 3-manifold; surgery presentation; linking matrix; stable equivalence; borromean surgery; Y-graph; Y-link.
\end{abstract}

{\small
\begin{spacing}{0.1}
\tableofcontents
\end{spacing}}

    \section*{Introduction}

A well-known result of Matveev \cite[Theorem 2]{Mat} asserts that any two closed connected oriented 3-manifolds can be related by a sequence of 
borromean surgeries if and only if they have isomorphic first homology groups and linking pairings. It is an important 
result, which is useful is particular in the Goussarov-Habiro theory of finite type invariants of 3-manifolds, based 
on borromean surgeries. The more direct application of Matveev's result in this theory is the fact that the degree 0 
invariants are exactly encoded by the isomorphism class of the first homology group equipped with the linking pairing. 

One can study 3-manifolds equipped with an additional structure, as a homological parametrization (to study the Reidemeister 
torsion, see \cite{Mas2}) or a fixed cohomology class (to define Costantino-Geer-Patureau's invariants \cite{CGP}). In these cases, 
a theory of finite type invariants similar to the Goussarov-Habiro's one can be defined. To study such theories, 
it would be useful to have a refined version of Matveev's result. It is the goal of this article to state and prove such a refinement. 
More precisely, a borromean surgery induces a canonical isomorphism between the first homology groups of the involved 3-manifolds, 
which preserves the linking pairing. We prove that any such isomorphism is induced by a sequence of borromean surgeries. 

\paragraph{Historical overview:}
Borromean surgeries were introduced by Matveev in \cite{Mat}. The formalism used in this article is due to Goussarov \cite{Gou}. 
An exposition of Matveev's and Goussarov's definitions, and of the relation between them, is given in \cite[\S1]{Mas3}. 
An equivalent move has been defined by Habiro \cite{Hab}. 

Let us recall the main lines of the proof of the mentioned result of Matveev. The fact that a borromean surgery preserves the isomorphism 
class of the first homology group equipped with the linking pairing is easy, and we focuse on the converse. 
Consider two closed connected oriented 3-manifolds with isomorphic first homology groups and linking pairings, and present them 
by surgery links in $S^3$. When the associated linking matrices are stably equivalent, {\em i.e.} are related 
by stabilizations/destabilizations and unimodular congruences, Matveev proves by topological manipulations 
that the two surgery links are related by borromean surgeries \cite[Lemma 2]{Mat}. This can also be achieved by applying results 
of Murakami-Nakanishi \cite{MuNa} and Garoufalidis-Goussarov-Polyak \cite{GGP} (see Section \ref{secth}, Proposition \ref{propMuNa}). 
To conclude, one needs to prove that any two matrices which define isomorphic groups and pairings are stably equivalent. 
This was first proved by Kneser and Puppe \cite[Satz 3]{KnePu} in the case of a finite group of odd order, and then 
by Durfee \cite[Corollary 4.2]{Dur} for any finite group, with a different and more direct method. 
The case of an infinite group deduces easily (see Kyle \cite[Lemma 1]{Kyle}).

In \cite{Mas3}, Massuyeau gives the analogue of Matveev's result in the case of 3-manifolds with spin structure, proving that two such manifolds 
are related by spin borromean surgeries if and only if they have isomorphic first homology groups and linking pairings, and equal Rochlin invariants 
modulo 8.

\paragraph{Plan of the paper:}
In the first section, we recall the definitions of the linking pairing and of borromean surgeries. 
In Section \ref{secth}, we state the main theorem, and we reduce its proof to another result, 
namely the fact that a given square finite presentation of the first homology group of a closed connected oriented 3-manifold, 
which encodes the linking pairing, can always be obtained from a surgery presentation of the manifold. The next three sections 
are devoted to the proof of this latter theorem. We first review Kirby calculus in Section \ref{secKirby}, and we give 
two different proofs of the second theorem in Sections~\ref{sectop} and~\ref{secalg}. The first proof is of topological nature and quite direct.
The second proof is of algebraic nature; it is more technical, but also more constructive. 
We end with an example in Section \ref{secex}.

\paragraph{Convention:} Unless otherwise mentioned, a 3-manifold is closed, connected and oriented. The homology class of a curve 
$\gamma$ in a manifold is denoted by $[\gamma]$. If $L\subset S^3$ is a link, $S^3(L)$ is the manifold obtained by surgery on $L$. 
If $K$ is a knot in a 3-manifold, a {\em meridian} of $K$ is the boundary of an embedded disk which meets $K$ exactly once. 
If $\Sigma$ and $\gamma$ are respectively a surface and a curve embedded in a 3-manifold, which intersect transversely, then $\langle \Sigma,\gamma\rangle$ 
denotes their algebraic intersection number. 

\paragraph{Acknowledgments:} I wish to thank Christine Lescop for interesting suggestion.

    \section{Preliminaries} \label{secpre}

\paragraph{Linking pairings.}
Let $M$ be a 3-manifold. If $K$ and $J$ are knots in $M$ whose homology classes in $H_1(M;\Z)$ have finite orders, 
one can define the linking number of $K$ and $J$ as follows. Let $d$ be the order of $[K]\in H_1(M;\Z)$, and let $T(K)$ be a tubular neighborhood of $K$. 
A {\em Seifert surface} of $K$ is a surface $\Sigma$ embedded in $M\setminus Int(T(K))$ whose boundary $\partial \Sigma \subset \partial T(K)$ 
satisfies $[\partial\Sigma]=d[K]$ in $H_1(T(K);\Z)$. Such a surface always exists and can be chosen transverse to $J$. Define the linking number of $K$ and $J$ 
as $lk(K,J)=\frac{1}{d}\langle\Sigma,J\rangle$. It does not depend on the choice of the surface $\Sigma$, and it is symmetric. 
If the knots $K$ and $J$ are modified within their respective homology classes, the value of the linking number is modified by an integer. 
This allows the following definition. The {\em linking pairing} of M is the $\Z$-bilinear form
$$\begin{array}{c c c c } 
               \varphi_M : & \Tor(H_1(M;\Z))\times\Tor(H_1(M;\Z)) & \to & \Q/\Z \\
               & ([K],[J]) & \mapsto & lk(K,J)\ mod\ \Z
              \end{array}$$
where $\Tor$ stands for the torsion subgroup. This form is symmetric and non-degenerate. 

\paragraph{Framings.}
A 1-dimensional object is {\em framed} if it is equipped with a fixed normal vector field. In the case of a knot $K$ in a 3-manifold, it is equivalent to fix 
a {\em parallel} of the knot, {\em i.e.} a simple closed curve $\ell(K)$ on the boundary of a tubular neighborhood $T(K)$ of $K$ which is isotopic to $K$ 
in $T(K)$. If $K$ is a framed knot in a 3-manifold, with fixed parallel $\ell(K)$, whose homology class has finite order, define the self-linking of $K$ 
by $lk(K,K)=lk(K,\ell(K))$.

\paragraph{Borromean surgeries.}
The {\em standard Y-graph} is the graph $\Gamma_0\subset \mathbb{R}^2$ represented in Figure \ref{figY}. 
\begin{figure}[htb] 
\begin{center}
\begin{tikzpicture} [scale=0.15]
\newcommand{\feuille}[1]{
\draw[rotate=#1,thick,color=gray] (0,-11) circle (5);
\draw[rotate=#1,thick,color=gray] (0,-11) circle (1);
\draw[rotate=#1,line width=8pt,color=white] (-2,-6.42) -- (2,-6.42);
\draw[rotate=#1,thick,color=gray] (2,-1.15) -- (2,-6.42);
\draw[rotate=#1,thick,color=gray] (-2,-1.15) -- (-2,-6.42);
\draw[white,rotate=#1,line width=5pt] (0,0) -- (0,-8);
\draw[rotate=#1] (0,0) -- (0,-8);
\draw[rotate=#1] (0,-11) circle (3);}
\feuille{0}
\feuille{120}
\feuille{-120}
\draw[->] (15,1) -- (12,3);
\draw (18,0) node {leaf};
\draw (4.7,-9.2) node{$\Gamma_0$};
\draw[color=gray] (6.5,-16.8) node{$\Sigma(\Gamma_0)$};
\end{tikzpicture}
\end{center}
\caption{The standard Y-graph}\label{figY}
\end{figure}
With $\Gamma_0$ is associated a regular neighborhood $\Sigma(\Gamma_0)$ of $\Gamma_0$ in the plane. 
The surface $\Sigma(\Gamma_0)$ is oriented with the usual convention. 
Let $M$ be a 3-manifold and let $h:\Sigma(\Gamma_0)\to M$ be an embedding. The image $\Gamma$ of $\Gamma_0$ is a {\em Y-graph}, endowed with  
its {\em associated surface} $\Sigma(\Gamma)=h(\Sigma(\Gamma_0))$. The looped edges of $\Gamma$ are its {\em leaves}. 
The Y-graph $\Gamma$ is equipped with the framing induced by $\Sigma(\Gamma)$. 

\begin{figure}[hbt] 
\begin{center}
\begin{tikzpicture} [scale=0.15]
\begin{scope}
\newcommand{\feuille}[1]{
\draw[rotate=#1] (0,0) -- (0,-8);
\draw[rotate=#1] (0,-11) circle (3);}
\feuille{0}
\feuille{120}
\feuille{-120}
\draw (3,-4) node{$\Gamma$};
\end{scope}
\draw[very thick,->] (21.5,-3) -- (23.5,-3);
\begin{scope}[xshift=1200]
\newcommand{\bras}[1]{
\draw[rotate=#1] (0,-1.5) circle (2.5);
\draw [rotate=#1,white,line width=8pt] (-0.95,-4) -- (0.95,-4);
\draw[rotate=#1] {(0,-11) circle (3) (1,-3.9) -- (1,-7.6)};
\draw[rotate=#1,white,line width=6pt] (-1,-5) -- (-1,-8.7);
\draw[rotate=#1] {(-1,-3.9) -- (-1,-8.7) (-1,-8.7) arc (-180:0:1)};}
\bras{0}
\draw [white,line width=6pt,rotate=120] (0,-1.5) circle (2.5);
\bras{120}
\draw [rotate=-120,white,line width=6pt] (-1.77,0.27) arc (135:190:2.5);
\draw [rotate=-120,white,line width=6pt] (1.77,0.27) arc (45:90:2.5);
\bras{-120}
\draw [white,line width=6pt] (-1.77,0.27) arc (135:190:2.5);
\draw [white,line width=6pt] (1.77,0.27) arc (45:90:2.5);
\draw (-1.77,0.27) arc (135:190:2.5);
\draw (1.77,0.27) arc (45:90:2.5);
\draw (3.5,-4.5) node{$L$};
\end{scope}
\end{tikzpicture}
\end{center}
\caption{Y-graph and associated surgery link}\label{figborro}
\end{figure}

Let $\Gamma$ be a Y-graph in a 3-manifold $M$ (which may have a non-empty boundary). Let $\Sigma(\Gamma)$ be its associated surface. 
In $\Sigma(\Gamma)\times[-1,1]$, associate with $\Gamma$ the six-component link $L$ represented in Figure \ref{figborro}, 
with the blackboard framing. The \emph{borromean surgery on $\Gamma$} is the usual surgery along the framed link $L$. 
The manifold obtained from $M$ by surgery on $\Gamma$ is denoted by $M(\Gamma)$. 

The borromean surgery on a Y-graph $\Gamma$ in a 3-manifold $M$ can be realized by removing the interior of a tubular neighborhood 
$N$ of $\Gamma$ and gluing instead another genus 3 handlebody, via an isomorphism of their boundaries which is the identity in homology 
(\cite[Section 6]{Mat}, see also \cite[Lemma 1]{Mas3}). This implies that 
$\ker(H_1(\partial N;\Z)\to H_1(N;\Z))=\ker(H_1(\partial N;\Z)\to H_1(N(\Gamma);Z))\subset H_1(\partial N;\Z)$, where we consider the applications 
induced in homology by the natural inclusions.

A {\em Y-link} in a 3-manifold is a disjoint union of Y-graphs. The {\em borromean surgery on a Y-link} is given by the simultaneous 
borromean surgeries on all its components. Note that a finite sequence of borromean surgeries can always be performed by a single 
borromean surgery on a Y-link.

    \section{Realizing isomorphisms beetween linking pairings} \label{secth}

We begin with the lemma whose converse is the object of the article.
\begin{lemma} \label{lemmaisoind}
 Let $M$ be a 3-manifold. Let $\Gamma$ be a Y-link in $M$. The surgery on $\Gamma$ induces a canonical isomorphism 
$\xi_\Gamma : H_1(M;\Z) \iso H_1(M(\Gamma);\Z)$ which preserves the linking pairing.
\end{lemma}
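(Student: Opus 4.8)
The plan is to construct the isomorphism $\xi_\Gamma$ using the local description of borromean surgery given at the end of Section~\ref{secpre}, and then verify that it preserves the linking pairing. The key structural fact provided is that surgery on a Y-graph $\Gamma$ amounts to removing the interior of a tubular neighborhood $N$ of $\Gamma$ (a genus~3 handlebody) and regluing another handlebody along a boundary identification that is the identity on $H_1(\partial N;\Z)$; moreover the kernels of $H_1(\partial N;\Z)\to H_1(N;\Z)$ and $H_1(\partial N;\Z)\to H_1(N(\Gamma);\Z)$ coincide. By taking disjoint neighborhoods over the components, the same holds for a whole Y-link. I would first reduce to a single Y-graph (or to the whole neighborhood $N$ of the Y-link) and set $E=M\setminus \mathrm{Int}(N)$, so that $M=E\cup_{\partial N} N$ and $M(\Gamma)=E\cup_{\partial N} N(\Gamma)$ are built from the same exterior $E$ glued to two handlebodies along the same boundary.

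\emph{Defining the map.} First I would write the relevant pieces of the Mayer--Vietoris sequences for these two decompositions. Because the coincidence of kernels forces the images of $H_1(\partial N;\Z)$ in $H_1(N;\Z)$ and in $H_1(N(\Gamma);\Z)$ to be abstractly the same, the two Mayer--Vietoris sequences have identical maps out of $H_1(\partial N;\Z)$, and hence the cokernels computing $H_1(M;\Z)$ and $H_1(M(\Gamma);\Z)$ are canonically identified. Concretely, every class in $H_1(M;\Z)$ can be represented by a curve pushed off $N$ into the exterior $E$ (the inclusion $E\hookrightarrow M$ is surjective on $H_1$ since $N$ is a handlebody, so its complement carries all the homology), and the identical curve in $E\subset M(\Gamma)$ defines $\xi_\Gamma$. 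I would then check that this is well defined (independent of the representative) and bijective, the point being that the ambiguity in choosing a representative lives in the image of $H_1(\partial N;\Z)$, which is the same for both manifolds.

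\emph{Preserving the pairing.} Here is where I expect the real work. Given two torsion classes $[K],[J]\in \mathrm{Tor}(H_1(M;\Z))$, I would isotope their representatives into the exterior $E$, so that the same curves $K,J$ sit inside $M(\Gamma)$ as well. To compute $\varphi_M([K],[J])$ I need a Seifert surface $\Sigma$ for $K$ in $M$; the subtlety is that $\Sigma$ may intersect $N$, whereas for the linking computation in $M(\Gamma)$ I want a surface that survives the regluing. The strategy is to arrange $\Sigma$ to meet $N$ in a controlled way: since the boundary gluing is the identity on $H_1(\partial N;\Z)$ and the two kernels agree, any surface piece of $\Sigma$ whose boundary lies on $\partial N$ and bounds in $N$ also bounds in $N(\Gamma)$, so I can cut $\Sigma$ along $\partial N$ and recap the pieces lying in $N$ by surfaces in $N(\Gamma)$ without changing the boundary on $\partial N$ nor the intersection with $J$ (which has been pushed entirely into $E$). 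This produces a Seifert surface for $K$ in $M(\Gamma)$ with $\langle \Sigma', J\rangle = \langle \Sigma, J\rangle$, giving the same linking number modulo $\Z$.

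\emph{Main obstacle.} The delicate step is the last one: ensuring that when $\Sigma$ is cut along $\partial N$, the portion inside $N$ can be replaced by a surface inside $N(\Gamma)$ with exactly the same boundary curve on $\partial N$ and no new intersections with $J$. This requires that the boundary curve $\Sigma\cap\partial N$ represents a class in $\ker(H_1(\partial N;\Z)\to H_1(N(\Gamma);\Z))$, which is guaranteed precisely by the kernel equality $\ker(H_1(\partial N)\to H_1(N))=\ker(H_1(\partial N)\to H_1(N(\Gamma)))$ quoted in the preliminaries; and one must also keep $J$ away from $N$ throughout, which is legitimate because $J$ has finite order and its class lies in the image of $H_1(E)$. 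Once these intersection-theoretic details are handled, symmetry and non-degeneracy of the pairing are inherited automatically, and functoriality (canonicity) of $\xi_\Gamma$ follows from the naturality of the Mayer--Vietoris identification.
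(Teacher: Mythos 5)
Your proposal is correct and follows essentially the same route as the paper: the isomorphism is defined exactly as in the paper's proof, via the Mayer--Vietoris sequence for $M=E\cup_{\partial N}N$ together with the quoted equality $\ker(H_1(\partial N;\Z)\to H_1(N;\Z))=\ker(H_1(\partial N;\Z)\to H_1(N(\Gamma);\Z))$, and the pairing is checked by modifying a Seifert surface only inside the surgered neighborhood while keeping $J$ in the exterior. The only difference is one of explicitness: where you invoke the kernel equality to recap $\Sigma\cap N$ by an abstract surface in $N(\Gamma)$ with the same boundary on $\partial N$, the paper arranges $\Sigma$ to meet $\Gamma$ only in its leaves and exhibits the replacement surface concretely in the reglued handlebody (its Figure 4), which is precisely an explicit witness of the homological fact you use.
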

\begin{proof}
 Let $N$ be a tubular neighborhood of $\Gamma$ in $M$, and set $X=M\setminus N$. The Mayer-Vietoris sequence associated with $M=X\cup N$ 
yields the exact sequence:
$$H_1(\partial N;\Z)\to H_1(N;\Z)\oplus H_1(X;\Z)\to H_1(M;\Z)\to 0.$$
Since $H_1(\partial N;\Z)\cong H\oplus H_1(N;\Z)$, where $H=\ker(H_1(\partial N;\Z)\to H_1(N;\Z))$, we have $\displaystyle H_1(M;\Z)\cong \frac{H_1(X;\Z)}{H}$. 
We have seen in the previous section that $H=\ker(H_1(\partial N;\Z)\to H_1(N(\Gamma);Z))\subset H_1(\partial N;\Z)$, 
hence we have similarly $\displaystyle H_1(M(\Gamma);\Z)\cong \frac{H_1(X;\Z)}{H}$, and it follows that $H_1(M;\Z)$ and $H_1(M(\Gamma);\Z)$ are canonically identified. 

The canonical isomorphism $\xi_{\Gamma}:H_1(M;\Z)\to H_1(M(\Gamma);\Z)$ can be described as follows. Let $\eta\in H_1(M;\Z)$. 
Represent $\eta$ by a knot $K$ in $M$ disjoint from $\Gamma$. The knot $K$ is not affected by the surgery. The image $\xi_{\Gamma}(\eta)$ 
is the homology class of $K\subset M(\Gamma)$. 

Let us check that the linking numbers are preserved. Let $K$ and $J$ be knots in $M$, disjoint from $\Gamma$, whose homology classes have finite orders. 
Let $\Sigma$ be a Seifert surface of $K$, transverse to $\Gamma$ and $J$. We may assume that the only edges of $\Gamma$ 
that $\Sigma$ meets are its leaves. The surgery modifies the tubular neighborhood $N$ of $\Gamma$. 
At each point of $\Sigma\cap\Gamma$, remove a little disk from $\Sigma$ and replace it, after surgery, with the surface drawn 
in Figure \ref{figsurface}, where the apparent boundary inside $N(\Gamma)$ bounds a disk in the corresponding reglued torus. 
\begin{figure}[htb] 
\begin{center}
\begin{tikzpicture} [scale=0.3,fill opacity=0.5]

\newcommand{\creux}[2]{
\draw[xshift=#1,yshift=#2,thick] (-1.5,0.2) ..controls +(0.5,-0.7) and +(-0.5,-0.7) .. (1.5,0.2);
\draw[xshift=#1,yshift=#2,thick] (-1.2,0) ..controls +(0.6,0.4) and +(-0.6,0.4) .. (1.2,0);}
\creux{0}{-11cm}
\creux{9.5cm}{5.5cm}
\creux{-9.5cm}{5.5cm}

\newcommand{\bras}[1]{
\draw[rotate=#1,thick] (-2.5,-6.67) arc (-240:60:5);
\draw[rotate=#1] (0,-1.5) circle (2.5);
\draw [rotate=#1,white,line width=8pt] (-0.95,-4) -- (0.95,-4);
\draw[rotate=#1] {(0,-11) circle (3) (1,-3.85) -- (1,-7.6)};
\draw[rotate=#1,white,line width=6pt] (-1,-5) -- (-1,-8.7);
\draw[rotate=#1] {(-1,-3.85) -- (-1,-8.7) (-1,-8.7) arc (-180:0:1)};
\draw[rotate=#1,thick] (2.5,-6.67) ..controls +(-1.73,1) and +(-1.73,1) .. (7.03,1.17);}
\bras{0}
\draw [white,line width=10pt,rotate=120] (0,-1.5) circle (2.5);
\bras{120}
\draw [rotate=-120,white,line width=10pt] (-1.77,0.27) arc (135:190:2.5);
\draw [rotate=-120,white,line width=10pt] (1.77,0.27) arc (45:90:2.5);
\bras{-120}
\draw [white,line width=10pt] (-1.77,0.27) arc (135:190:2.5);
\draw [white,line width=10pt] (1.77,0.27) arc (45:90:2.5);
\draw (-1.77,0.27) arc (135:190:2.5);
\draw (1.77,0.27) arc (45:90:2.5);

\draw[gray] (0,-11.3) ..controls +(-1,-0.8) and +(-1,0.8) .. (0,-16);
\draw[gray] (0,-11.3) ..controls +(0.5,-0.3) and +(0,0.4) .. (0.75,-13.3) (0.7,-14.4) .. controls +(0,-0.3) and +(0.5,0.3) .. (0,-16);
\draw[gray] (0,-13.6) ..controls +(-0.17,-0.13) and +(-0.17,0.13) .. (0,-14.4);
\draw[gray] (0,-13.6) ..controls +(0.17,-0.13) and +(0.17,0.13) .. (0,-14.4);
\draw[gray] (0,-7.6) ..controls +(-0.17,-0.13) and +(-0.17,0.13) .. (0,-8.4);
\draw[gray] (0,-7.6) ..controls +(0.17,-0.13) and +(0.17,0.13) .. (0,-8.4);
\draw[gray] (0,-13.6) arc (-90:90:2.6);
\draw[gray] (0,-14.4) arc (-90:90:3.4);
\draw[gray] (0,-1.5) circle (2.3);
\draw [rotate=120,white,line width=10pt] (-1.77,0.27) arc (135:190:2.5);
\draw [rotate=120,white,line width=10pt] (1.77,0.27) arc (45:90:2.5);
\draw [rotate=120] (-1.77,0.27) arc (135:190:2.5);
\draw [rotate=120] (1.77,0.27) arc (45:90:2.5);
\draw [white,line width=8pt] (-0.75,-3.8) -- (0.75,-3.8);
\draw[gray] (0.8,-3.65) -- (0.8,-7.6) (-0.8,-3.65) -- (-0.8,-8.7) (-0.8,-8.7) arc (-180:0:0.8);
\fill[gray!40] (0,-1.5) circle (2.3);
\fill[gray!40] (-0.8,-3.65) -- (-0.8,-8.7) arc (-180:0:0.8) -- (0.8,-3.65) -- (-0.8,-3.65);
\draw[gray] (-1.7,-1.7) circle (0.3);
\draw[gray] (1,-0.3) circle (0.3);
\draw[gray] (-1.7,-1.4) arc (-100:-30:2.3);
\draw[gray] (-1.7,-2) arc (-100:-25:2.7);
\fill[gray!80] (-1.7,-2) arc (-100:-25:2.7) (1.26,-0.45) ..controls +(0.15,0.26) and +(0.15,0.26) .. (0.8,-0.1) 
  ..controls +(-0.6,-1.02) and +(1.2,0) .. (-1.7,-1.4) ..controls +(-0.35,0) and +(-0.35,0) .. (-1.7,-2);
\fill[gray!40] (0,-11.3) ..controls +(-1,-0.8) and +(-1,0.8) .. (0,-16) .. controls +(1,0.8) and +(1,-0.8) .. (0,-11.3);
\fill[gray!80] (0,-13.6) ..controls +(-0.17,-0.13) and +(-0.17,0.13) .. (0,-14.4) (0,-14.4) arc (-90:90:3.4) 
  (0,-7.6) ..controls +(-0.17,-0.13) and +(-0.17,0.13) .. (0,-8.4) ..controls +(3.4,0) and +(3.4,0) .. (0,-13.6);

\draw[very thin] (3.8,0.75) arc (0:-105:2.5);
\draw[very thin] (-1.2,0.75) arc (-180:-138:2.5);

\end{tikzpicture}
\end{center}
\caption{Surface in the reglued handlebody} \label{figsurface}
\end{figure}
This provides a surface $\Sigma'$ in $M(\Gamma)$, which is again a Seifert surface of $K$, and such that the algebraic intersection numbers $\langle \Sigma,J\rangle$ and 
$\langle \Sigma',J\rangle$ are equal.
\end{proof}

The main result of the article is the following converse of Lemma \ref{lemmaisoind}.
\begin{theorem} \label{thmain}
 Let $M$ and $N$ be 3-manifolds. Let $\xi:H_1(M;\Z)\iso H_1(N;\Z)$ be an isomorphism which preserves the linking pairing. 
Then there is a Y-link $\Gamma$ in $M$ such that $N\cong M(\Gamma)$ and $\xi=\xi_\Gamma$.
\end{theorem}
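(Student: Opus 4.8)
The plan is to reduce Theorem \ref{thmain} to two ingredients and then assemble them. The first ingredient is the \emph{algebraic realization} statement announced in the introduction: any square finite presentation of $H_1(M)$ that encodes the linking pairing $\varphi_M$ arises from a genuine surgery presentation of $M$. The second is the fact (Proposition \ref{propMuNa}, after Murakami--Nakanishi and Garoufalidis--Goussarov--Polyak) that two framed links in $S^3$ with the same linking matrix are related by borromean surgeries. Essentially all the difficulty is concentrated in the first ingredient, whose proof occupies the later sections; here I only explain how the given isomorphism $\xi$ gets threaded through the two.

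First I would fix a surgery presentation $N\cong S^3(L_N)$, with $L_N$ an $n$-component framed link of linking matrix $B$. The meridians of $L_N$ give a surjection $\pi_N:\Z^n\twoheadrightarrow H_1(N)$ with kernel $B\Z^n$, so that $H_1(N)\cong\Z^n/B\Z^n$, and by construction $B$ encodes $\varphi_N$ relative to $\pi_N$. Now I would transport this presentation to $M$ through $\xi$, setting $\pi_M=\xi^{-1}\circ\pi_N:\Z^n\to H_1(M)$. Since $\xi$ is an isomorphism, $\pi_M$ is again surjective with the same kernel $B\Z^n$, so $B$ is also a square presentation matrix for $H_1(M)$; and since $\xi$ \emph{preserves} the linking pairing, the very matrix $B$ that encoded $\varphi_N$ now encodes $\varphi_M$ relative to $\pi_M$. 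This is exactly the input needed for the realization theorem, which I would apply to $(B,\pi_M)$ to obtain a framed link $L_M\subset S^3$ and a homeomorphism $f:S^3(L_M)\iso M$ whose linking matrix is again $B$ and for which $f_*$ carries the meridian presentation of $S^3(L_M)$ onto $\pi_M$.

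At this point $L_M$ and $L_N$ are two framed links in $S^3$ sharing the linking matrix $B$, so Proposition \ref{propMuNa} furnishes a Y-link $\Gamma_0$ in the complement $S^3\setminus L_M$ whose borromean surgery turns $L_M$ into $L_N$. Pushing $\Gamma_0$ into $M$ by $f$ gives a Y-link $\Gamma=f(\Gamma_0)$. Because $\Gamma_0$ is disjoint from $L_M$, the borromean surgery on $\Gamma$ and the surgery on $L_M$ are supported on disjoint pieces and may be commuted: performing the borromean surgery first turns $S^3(L_M)$ into $S^3(L_N)$, which yields a homeomorphism $F:M(\Gamma)\iso S^3(L_N)=N$. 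This already gives $N\cong M(\Gamma)$, and it remains only to identify the induced isomorphism.

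To check $\xi=\xi_\Gamma$ under $F$, I would use the meridian description of $\xi_\Gamma$ from Lemma \ref{lemmaisoind}. Choose a meridian $\mu_i$ of the $i$-th component of $L_M$ located away from the support of the surgery on $\Gamma_0$; then $\mu_i$ is disjoint from $\Gamma$, so its class is fixed by the surgery, while $F$ carries it to the meridian of the $i$-th component of $L_N$, namely $\pi_N(e_i)$. Reading this through $f$ gives $F_*\circ\xi_\Gamma\bigl(\pi_M(e_i)\bigr)=\pi_N(e_i)$ for every $i$, that is $F_*\circ\xi_\Gamma\circ\xi^{-1}\circ\pi_N=\pi_N$; since $\pi_N$ is onto we conclude $F_*\circ\xi_\Gamma=\xi$, i.e.\ $\xi_\Gamma=\xi$ after identifying $M(\Gamma)$ with $N$ via $F$. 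The main obstacle is thus entirely the realization theorem: producing not merely a surgery link with the right group-and-pairing, but one whose \emph{meridians represent the prescribed classes} $\pi_M(e_i)$, which requires controlling the Kirby moves finely enough to track the generating set — the delicate work carried out in the sections that follow.
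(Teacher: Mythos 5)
Your proposal is correct and follows essentially the same route as the paper: its proof of Theorem \ref{thmain} likewise transports the meridian presentation of one manifold through the isomorphism (in the opposite direction, pushing the presentation of $H_1(M;\Z)$ forward by $\xi$ rather than pulling back by $\xi^{-1}$), applies Theorem \ref{thpres}, and concludes with Proposition \ref{propMuNa}, whose proof already contains your commuting-the-surgeries and meridian-tracking arguments. The only slip is that Theorem \ref{thpres} produces a link whose linking matrix is a \emph{stabilization} of $B$ rather than $B$ itself, so --- exactly as the paper does when it forms $L_1'$ --- you must first add $(\pm1)$-framed trivial components to $L_N$ (extending $\pi_N$ by zero generators) before invoking Proposition \ref{propMuNa}; this is a one-line patch that leaves the rest of your argument unchanged.
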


The proof of Theorem \ref{thmain} uses surgery presentations of the manifolds. In order to have an associated presentation of the first homology group , 
we need to fix an orientation of the surgery link.

An {\em oriented surgery presentation} of a 3-manifold $M$ is an oriented framed link 
$L\subset S^3$ such that $M$ is obtained from $S^3$ by surgery on $L$, with a given numbering $L=\sqcup_{1\leq i\leq n}L_i$ 
of the knot components of $L$. The {\em linking matrix of $L$} is the matrix $A(L)$ defined by $(A(L))_{ij}=-lk(L_i,L_j)$ 
(this convention with a minus sign is unusual, but simplifies the expression of the linking pairing 
in terms of the linking matrix). For $1\leq i\leq n$, let $m_i$ be an {\em oriented meridian} 
of $L_i$, {\em i.e.} such that $lk(L_i,m_i)=1$. The {\em presentation of $H_1(M;\Z)$ induced by the oriented surgery presentation $L$} 
is given by the family of generators $\b{m}=([m_1],..,[m_n])$ up to the relations given by the columns of $A(L)$. 
It is well known that any 3-manifold admits an (oriented) surgery presentation (Lickorish \cite{Lick2}, Wallace \cite{Wal}).
\begin{proposition} \label{propMuNa}
 Let $M$ (resp. $M'$) be a 3-manifold with oriented surgery presentation $L$ (resp. $L'$). Denote by $\b{m}$ (resp. $\b{m'}$) 
the associated family of generators of $H_1(M;\Z)$ (resp. $H_1(M';\Z)$). If $A(L)=A(L')$, then there is a Y-link $\Gamma$ in $M$ 
such that $M'\cong M(\Gamma)$ and $\xi_\Gamma(\b{m})=\b{m'}$.
\end{proposition}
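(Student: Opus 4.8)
The plan is to transform $L$ into $L'$ inside $S^3$ by borromean surgeries supported in the complement of the surgery link, and then to carry these surgeries through the surgery on $L$. Since $A(L)=A(L')$, the two framed oriented ordered links $L$ and $L'$ have the same number of components, the same pairwise linking numbers $lk(L_i,L_j)$ for $i\neq j$, and the same framings $lk(L_i,L_i)$. First I would invoke the theorem of Murakami--Nakanishi \cite{MuNa}, which asserts that two links with equal pairwise linking numbers are related by a finite sequence of delta moves; with a little care this keeps track of the orientations and of the framings, so that $L$ and $L'$ are delta-equivalent as framed oriented ordered links.

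Next I would realize this delta-equivalence by Y-graphs. By the results of Garoufalidis--Goussarov--Polyak \cite{GGP}, each delta move performed on three strands is the effect of a borromean surgery on a single Y-graph whose three leaves are meridians of the three strands involved, and whose associated surface (hence the associated surgery link) can be taken in the complement of $L$; indeed, although each leaf encircles a strand, a thin regular neighborhood of the Y-graph avoids $L$ entirely. Concatenating the Y-graphs associated with the successive delta moves produces a Y-link $\widetilde{\Gamma}$ contained in $S^3\setminus L$ such that the borromean surgery on $\widetilde{\Gamma}$ carries $(S^3,L)$ to $(S^3,L')$. Because $\widetilde{\Gamma}$ lies in $S^3\setminus L$, it survives the surgery on $L$ and defines a Y-link $\Gamma$ in $M=S^3(L)$.

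Now $L$ and the surgery link associated with $\widetilde{\Gamma}$ are disjoint, so the corresponding surgeries commute. Performing the surgery on $\widetilde{\Gamma}$ first turns $L$ into $L'$, and then surgery on $L'$ yields $M'$; performing the surgery on $L$ first yields $M$, and then the surgery on the descended Y-link $\Gamma$ yields $M(\Gamma)$. Hence $M(\Gamma)\cong M'$. Finally I would check the homological assertion. Each meridian $m_i$ of $L_i$ can be chosen disjoint from $\widetilde{\Gamma}$; it is then unaffected by the borromean surgery, and under the transformation $L\mapsto L'$ it becomes a meridian of $L'_i$, that is, $m'_i$. By the description of $\xi_\Gamma$ obtained in the proof of Lemma \ref{lemmaisoind} — the image of the class of a knot disjoint from $\Gamma$ is its own class after surgery — we get $\xi_\Gamma([m_i])=[m'_i]$, and therefore $\xi_\Gamma(\b{m})=\b{m'}$.

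The main obstacle is the interface between the two cited results: one must ensure that the Y-graphs furnished by \cite{GGP} realize the delta moves of \cite{MuNa} with their leaves as genuine meridians lying in $S^3\setminus L$, so that the commutation of surgeries and the identification of the transported meridians with $\b{m'}$ go through; and one must make sure that the framings, i.e.\ the diagonal entries of the linking matrix, are preserved throughout the delta-equivalence, not merely the off-diagonal linking numbers.
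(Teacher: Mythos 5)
Your proof is correct and takes essentially the same route as the paper: Murakami--Nakanishi's $\Delta$-move equivalence for oriented framed links with equal linking matrices \cite{MuNa}, each move realized by a borromean surgery on a Y-graph via \cite{GGP}, producing a Y-link in $S^3\setminus L$ that is transported through the surgery on $L$, with the homological claim following because the meridians are unaffected. The two points you flag as obstacles are the ones the paper disposes of implicitly, by defining the $\Delta$-move directly as a move on oriented \emph{framed} links preserving orientations and framings, so your argument matches the paper's in substance and in all details.
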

\begin{proof}
 A {\em $\Delta$-move} on an oriented framed link is a local move as represented in Figure~\ref{figdeltamove} involving three components of the link 
(with possible repetitions), which keeps unchanged the orientations and the framings. 
By \cite[Theorem 1.1]{MuNa}, 
\newcommand{\DM}{
\draw (-3,-1) -- (3,-1);
\draw[color=white,line width=8pt,rotate=120] (-3,-1) -- (0,-1);
\draw[rotate=120] (-3,-1) -- (3,-1);
\draw[color=white,line width=8pt,rotate=240] (-3,-1) -- (0,-1);
\draw[rotate=240] (-3,-1) -- (3,-1);
\draw[color=white,line width=8pt] (-3,-1) -- (0,-1);
\draw (-3,-1) -- (1,-1);}
\begin{figure}[htb] 
\begin{center}
\begin{tikzpicture} [scale=0.5]
\DM
\draw[<->] (4,0) -- (5,0);
\begin{scope} [xshift=9cm,yshift=1cm,rotate=60]
\DM
\end{scope}
\end{tikzpicture}
\end{center}
\caption{$\Delta$-move} \label{figdeltamove}
\end{figure}
any two oriented framed links with the same linking matrix are related by a sequence of $\Delta$-moves. 
By \cite[Lemma 2.1]{GGP}, such a $\Delta$-move can be realised by a borromean surgery (see Figure~\ref{figdeltasur}), 
\begin{figure}[htb] 
\begin{center}
\begin{tikzpicture} [scale=0.5]
\DM
\newcommand{\leaf}[1]{\begin{scope} [rotate=#1]
\draw (0,0) -- (0,-0.7);
\draw[color=white,line width=4pt] (0.3,-0.9) -- (0.3,-1.1);
\draw (0,-1) circle (0.3);
\draw[color=white,line width=4pt] (-0.4,-1) -- (-0.2,-1);
\draw (-0.5,-1) -- (-0.1,-1);
\end{scope}}
\leaf{0}
\leaf{120}
\leaf{240}
\draw[line width=1.5pt] (4.5,0) node {$\sim$};
\begin{scope} [xshift=9cm,yshift=1cm,rotate=60]
\DM
\end{scope}
\end{tikzpicture}
\end{center}
\caption{Borromean surgery realizing a $\Delta$-move} \label{figdeltasur}
\end{figure}
which means that the pair (3-manifold, link) obtained by the surgery is homeomorphic to the pair ($S^3$, link) obtained by the $\Delta$-move. 

Finally, there exists a Y-link $\Gamma$ in $S^3\setminus L$ 
such that $S^3(\Gamma)\cong S^3$, and the copy of $L$ in $S^3(\Gamma)$ is isotopic to $L'$. Since the surgery on $\Gamma$ is performed 
in the complement of $L$, we can consider the copy of $\Gamma$ in $M$, and we have $M(\Gamma)\cong M'$. The assertion on $\xi_\Gamma$ 
follows from the fact that the meridians of the components of $L$ are not affected by the $\Delta$-moves. 
\end{proof}

We shall prove that we can choose a surgery presentation providing a fixed presentation of the first homology group. 
Let us fix a few algebraic definitions. 

A {\em linking matrix} is a square symmetric matrix with integral coefficients. A linking matrix $A$ is {\em non-degenerate} 
if $\det(A)\neq0$, and {\em admissible} if $A=\begin{pmatrix} A_0 & 0 \\ 0 & 0 \end{pmatrix}$ where $A_0$ is a non-degenerate linking 
matrix. By \cite[Lemma 1]{Kyle}, any linking matrix is congruent over the integers to an admissible linking matrix.

Let $A$ be a linking matrix. If $B=\begin{pmatrix} D & 0 \\ 0 & A \end{pmatrix}$, where $D$ is a diagonal matrix whose diagonal terms are $\pm1$, 
the matrix $B$ is a {\em stabilization} of $A$ and $A$ is a {\em destabilization} of $B$.

A {\em linking pairing} on a finite abelian group $H$ is a symmetric bilinear pairing $\varphi:H\times H\to\Q/\Z$. 
A {\em linked group} is a finitely generated abelian group $H$ whose torsion subgroup $\Tor(H)$ is equipped with a linking pairing. 
Let $A$ be a linking matrix of size $n$ and rank $r$. 
The matrix $A$ can be written $A=\tr{P}\begin{pmatrix} A_0 & 0 \\ 0 & 0 \end{pmatrix}P$, where $A_0$ is a non-degenerate linking matrix, 
$P$ is an integral matrix invertible over the integers, and $\tr{P}$ is the transpose of $P$. 
An {\em $A$-presentation} of a linked group $H$ is a family $\b{\eta}=(\eta_1,..,\eta_n)\subset H$ such that:
\begin{itemize}
 \item the following sequence is exact, where the first map is given by $A$ in the canonical bases and the second map sends 
  the canonical basis on $\b{\eta}$, $$\Z^n\fl{A}\Z^n\fl{ } H\fl{ } 0$$
 \item $\varphi((P^{-1}\eta)_i,(P^{-1}\eta)_j)=(A_0^{-1})_{ij}\ mod\ \Z$ for $1\leq i,j\leq r$, where $\varphi$ is the linking pairing 
  on $\Tor(H)$.
\end{itemize}

Let $M$ be a 3-manifold. The group $H_1(M;\Z)$ has a natural structure of linked group given by the linking pairing 
$\varphi_M$
of $M$. If $L$ is an oriented surgery presentation of $M$, 
the presentation of $H_1(M;\Z)$ induced by $L$ is an $A(L)$-presentation. 

\begin{theorem} \label{thpres}
 Let $M$ be a 3-manifold. Let $A$ be a linking matrix. Assume there is an $A$-presentation $\b{\eta}=(\eta_1,..,\eta_n)$ 
of $H_1(M;\Z)$. Then there is an oriented surgery presentation $L$ of $M$ such that $A(L)$ is a stabilization of $A$, 
and $L$ induces the $A(L)$-presentation $\b{\eta'}=(0,..,0,\eta_1,..,\eta_n)$.
\end{theorem}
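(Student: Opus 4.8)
The plan is to start from an arbitrary oriented surgery presentation of $M$ and to reach the desired one by Kirby calculus, reducing everything to an algebraic statement about presentations of the linked group $(H_1(M;\Z),\varphi_M)$. First I would fix, by Lickorish and Wallace, some oriented surgery presentation $L_0$ of $M$, with linking matrix $A_0=A(L_0)$ and induced $A_0$-presentation $\b{m}=([m_1],\dots,[m_k])$. The Kirby moves reviewed in Section~\ref{secKirby} act on the pair (linking matrix, family of meridian classes) in a completely explicit way, and only three operations are needed: a handle slide or a reorientation of a component realizes a congruence $A\mapsto\tr{P}AP$ for an elementary, diagonal, or permutation matrix $P$, under which the family of generators is replaced by $P^{-1}\b{m}$; a blow-up of a $\pm1$-framed unknot realizes a $\pm1$-stabilization and appends a generator equal to $0\in H_1(M;\Z)$ (its meridian bounds in the reglued solid torus); and the inverse destabilizations. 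All three preserve $S^3(L)\cong M$ and the property of being an $A(L)$-presentation. Hence it suffices to connect the pair $(A_0,\b{m})$ to the pair consisting of the stabilization $\left(\begin{smallmatrix}D&0\\0&A\end{smallmatrix}\right)$ and the family $(0,\dots,0,\b{\eta})$ by a sequence of such operations.

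This reduces the theorem to the following purely algebraic claim: there are stabilizations of $A_0$ and of $A$ to a common size $N$, a sign matrix $D$, and a matrix $P\in GL_N(\Z)$ such that $\big(\begin{smallmatrix}D&0\\0&A\end{smallmatrix}\big)=\tr{P}\,\widetilde{A_0}\,P$ and $P^{-1}\widetilde{\b{m}}=(0,\dots,0,\b{\eta})$, where $\widetilde{A_0}$ and $\widetilde{\b{m}}$ denote the zero-padded stabilizations. That the matrices themselves are stably congruent is exactly the classical fact, due to Kneser--Puppe, Durfee and Kyle \cite{KnePu,Dur,Kyle}, that two linking matrices presenting isomorphic linked groups are stably equivalent; the genuinely new content is to carry the distinguished family of generators along the congruence.

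For the generator bookkeeping I would first bring both matrices, by congruences and stabilizations, to one common stably-equivalent matrix $\widetilde{A}$, producing from $\b{m}$ and from $\b{\eta}$ two $\widetilde{A}$-presentations of $H_1(M;\Z)$. Two $\widetilde{A}$-presentations give two surjections $\Z^N\to H_1(M;\Z)$ with the same kernel $\mathrm{im}(\widetilde{A})$, so they differ by an automorphism $\alpha$ of $H_1(M;\Z)$; the second axiom of an $A$-presentation forces $\alpha$ to restrict to an isometry of $\varphi_M$ on $\Tor(H_1(M;\Z))$. The remaining, and decisive, point is to realize this $\alpha$ by a self-congruence of $\widetilde{A}$, that is, a matrix $P$ with $\tr{P}\widetilde{A}P=\widetilde{A}$ inducing $\alpha$ on the cokernel, possibly after one more stabilization to create room. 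Composing this $P$ with the congruence of the previous step, and reading off the induced transformation $P^{-1}$ on generators, yields the required presentation $(0,\dots,0,\b{\eta})$.

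The main obstacle is precisely this last step: showing that every isometry of the linking pairing is induced by an honest unimodular congruence of a sufficiently stabilized presenting matrix. I expect to handle it in two ways, matching the two proofs announced for Sections~\ref{sectop} and~\ref{secalg}. The topological route is to realize the classes $\eta_i$ directly by framed knots in $M$ disjoint from the surgery region, adjusting mutual and self-linking numbers to the entries of $A$ (a knot may be modified within its homology class, changing each linking number by an arbitrary integer, while the framing controls the diagonal), and then to incorporate these as new components of the surgery link, killing the auxiliary generators by $\pm1$-framed meridians. The algebraic route is to construct $P$ explicitly and blockwise from the structure theory of finite linking pairings, using stabilization to split off the needed transvections and sign changes, which also explains why the second proof is ``more constructive''.
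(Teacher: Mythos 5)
Your reduction is sound as far as it goes, and it matches the skeleton of the paper's second (algebraic) proof: Kirby moves act on the pair (linking matrix, meridian classes) exactly as you describe (this is Lemma \ref{lemmaKirby}), two $\widetilde{A}$-presentations differ by an automorphism $\alpha$ of $H_1(M;\Z)$ preserving $\varphi_M$ on torsion, and everything hinges on realizing such an $\alpha$ by a unimodular congruence of a stabilized presenting matrix. But that step, which you yourself label the main obstacle, is the entire mathematical content of the theorem --- it is Proposition \ref{proptech}, a refinement of Durfee's theorem \cite{Dur} --- and neither of your two sketches proves it. The phrase ``construct $P$ explicitly and blockwise from the structure theory of finite linking pairings'' is not an argument: that structure theory is delicate at the prime $2$, and realizing an arbitrary isometry by block operations is precisely what is hard. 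The paper instead proceeds by a lattice-gluing construction you do not anticipate: it equips $R=R_1\oplus R_1^\sharp$ with a unimodular form twisted by a lift $f$ of the identity of $H$, embeds $R_1^\sharp$ into $R\oplus R_2^\sharp$ by $y\mapsto(0,y,f(y))$, proves that the orthogonal complement $S$ is a unimodular lattice with $S\oplus R_1\cong R\oplus R_2$ as linked lattices, and reads off $P$ from this isomorphism; the verification $P\b{\gamma'}=\b{\eta'}$ then requires the matrix computation $V=F\ mod\ \M_{m,n}(\Z)A$. You also gloss over the free part: Durfee-type statements concern non-degenerate matrices, and the paper must first match the $\beta$ free generators by an explicit congruence $\begin{pmatrix} I & 0 \\ C & D \end{pmatrix}$ fixing the admissible matrix before Proposition \ref{proptech} can be applied to the torsion.

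Your topological route contains two concrete errors. First, for torsion classes the linking numbers are pinned modulo $\Z$: any knots $K_i$ representing $\eta_i$ satisfy $lk(K_i,K_j)\equiv\varphi_M(\eta_i,\eta_j)=(A_0^{-1})_{ij}\ mod\ \Z$, so ``adjusting mutual and self-linking numbers to the entries of $A$'' is impossible in general (those entries are integers, while the constraints are non-integral rationals). In Proposition \ref{propZHS} the knots are normalized to $lk(K_i,K_j)=(A_0^{-1})_{ij}$, one surgers $M$ along them to obtain a $\Z$-sphere $N$, and it is the \emph{dual} knots $\hat{K}_i\subset N$ that carry linking matrix exactly $A$; Matveev's theorem \cite[Theorem 2]{Mat} applied to $N$ versus $S^3$, together with diagonalization of an odd indefinite unimodular form \cite[Chap.~2, Th.~4.3]{MilHu}, then transports the presentation back to $S^3$. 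This detour through the homology sphere is what lets the first proof bypass the Durfee-type algebra entirely, whereas your version of ``incorporate the $K_i$ as new components'' leaves the combined linking matrix uncontrolled and so still requires the unproved algebraic step. Second, a $\pm1$-framed meridian does not cancel the component it encircles: a slam-dunk shows it merely shifts that component's framing by $\mp1$, so your added pairs change the manifold by surgery on the $K_i$; a cancelling pair needs a $0$-framed meridian. In summary, the easy outer reduction is correct, but the decisive step is missing in both of your proposed routes.
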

This result is the purpose of the next three sections. We first review Kirby calculus in Section~\ref{secKirby},
and we give two independant proofs of the theorem in Sections~\ref{sectop} and~\ref{secalg}.

\ \\
\proofof{Theorem \ref{thmain}}
Let $L_1$ be an oriented surgery presentation of $M$. 
Let $\b{\eta}$ be the associated $A(L_1)$-presentation of $H_1(M;\Z)$. The family $\b{\nu}=(\xi(\eta_1),..,\xi(\eta_n))$ 
is an $A(L_1)$-presentation of $H_1(N;\Z)$. By Theorem \ref{thpres}, there is an oriented surgery presentation $L_2$ of $N$ 
such that $A(L_2)$ is a stabilization of $A(L_1)$, and $L_2$ induces the $A(L_2)$-presentation $\b{\nu'}=(0,..,0,\xi(\eta_1),..,\xi(\eta_n))$. 
Define a surgery link $L_1'$ by adding $(\pm1)$-framed trivial components to $L_1$, and renumber the components, so that $A(L_1')=A(L_2)$ 
and $L_1'$ induces the $A(L_2)$-presentation $\b{\eta'}=(0,..,0,\eta_1,..,\eta_n)$. Conclude with Proposition \ref{propMuNa}.
\fin

    \section{Kirby transformations} \label{secKirby}

By a well-known theorem of Kirby \cite{Kir}, any two oriented surgery presentations of a 3-manifold $M$ 
are related by a finite sequence of the following K1 and K2 moves.
\begin{itemize}
 \item K1 (stabilization/destabilization): add or remove a $(\pm1)$-framed trivial component, unlinked with the other components, to the surgery link.
 \item K2: add or substract a component of the surgery link to another (see Figure \ref{figK2}).
\end{itemize}
\begin{figure}[htb] 
\begin{center}
\begin{tikzpicture} [scale=0.3]
\draw (0,0) -- (0,-6);
\draw[dashed] (0,2) -- (0,0) (0,-6) -- (0,-8);
\draw (-4,-4.5) arc (-90:90:1.5);
\draw[dashed] (-4,-1.5) arc (90:270:1.5);
\draw[->] (-3.9,-1.5) -- (-4,-1.5);
\draw (-4,-0.5) node{$L_j$};
\draw[->] (0,0) -- (0,-1);
\draw (1,-1) node{$L_i$};
\draw[->,line width=1.5pt] (6.5,-3) -- (7.5,-3);
\begin{scope} [xshift=18cm]
\draw (0,0) -- (0,-2.5) (0,-3.5) -- (0,-6);
\draw[dashed] (0,2) -- (0,0) (0,-6) -- (0,-8);
\draw (-4,-4.5) arc (-90:90:1.5);
\draw[dashed] (-4,-1.5) arc (90:270:1.5);
\draw (-4,-4.8) arc (-90:90:1.8);
\draw[dashed] (-4,-1.2) arc (90:270:1.8);
\draw[color=white,line width=3pt] (-2.26,-2.53) -- (-2.26,-3.48);
\draw (-2.26,-2.5) -- (0,-2.5) (-2.26,-3.5) -- (0,-3.5);
\draw[->] (-3.9,-1.5) -- (-4,-1.5);
\draw[->] (0,0) -- (0,-1);
\draw (1,-1) node{$L_i'$};
\end{scope}
\end{tikzpicture}
\caption{K2 move} \label{figK2}
\end{center}
\end{figure}

The effect of a Kirby move on the linking matrix is a stabilization/destabilization in the case of a K1 move, and a unimodular congruence 
in the case of a K2 move (where {\em unimodular} means that the congruence matrix has determinant 1). 
Conversely, a stabilization or a unimodular congruence on the linking matrix can be realized by a finite sequence 
of Kirby moves. Note that this converse does not hold for a destabilization. In order to prove Theorem \ref{thpres}, we need 
to understand the effect of a Kirby move, performed on a surgery link $L$, on the associated $A(L)$-presentation of the first homology group. 
The case of a K1 move is obvious: we add or remove a trivial generator to the presentation. Let us consider the case of a K2 move. 

Let $M$ be a 3-manifold with oriented surgery presentation $L=\sqcup_{1\leq k\leq n}L_k$. Define another oriented surgery presentation 
$L'=\sqcup_{1\leq k\leq n}L_k'$ of $M$ by adding the component $L_j$ to $L_i$ (Fig.~\ref{figK2}). The reason why $L'$ is again 
a surgery presentation of $M$ is the following. We can perform first the surgery on the component $L_j$ and then switch 
the component $L_i$ along the meridian disk bounded by the parallel of $L_j$. Now the effect on the meridians of the components of $L$ 
is represented in Figure~\ref{figK2mer}. If $m_k$ (resp. $m_k'$) is an oriented meridian of $L_k$ (resp. $L_k'$), 
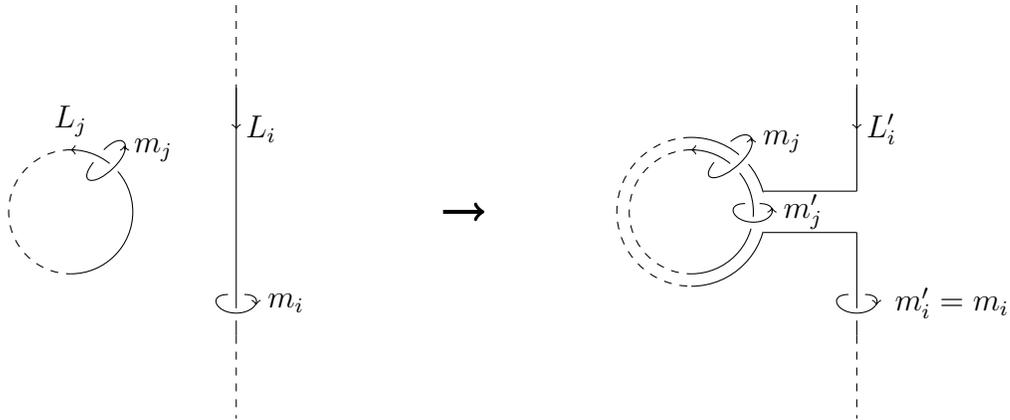
\begin{figure}[htb] 
\begin{center}
\begin{tikzpicture} [scale=0.55]
\draw (0,0) -- (0,-6);
\draw[dashed] (0,2) -- (0,0) (0,-6) -- (0,-8);
\draw (-4,-4.5) arc (-90:90:1.5);
\draw[dashed] (-4,-1.5) arc (90:270:1.5);
\draw[->] (-3.9,-1.5) -- (-4,-1.5);
\draw (-4,-0.8) node{$L_j$};
\draw[->] (0,0) -- (0,-1);
\draw (0.6,-1) node{$L_i$};
\draw[white] (-3,-2) node{$\bullet$};
\draw (-3.45,-1.8) .. controls +(-0.4,-0.5) and +(-0.5,-0.5) .. (-2.95,-1.92) .. controls +(0.5,0.5) and +(0.5,0.5) .. (-3.2,-1.5);
\draw[->] (-2.7,-1.5) -- (-2.7,-1.4);
\draw (-2,-1.5) node{$m_j$};
\draw[white] (0,-5.5) node{$\bullet$};
\draw (-0.2,-5) .. controls +(-0.5,0) and +(-0.5,-0) .. (0,-5.45) .. controls +(0.5,0) and +(0.5,0) .. (0.2,-5);
\draw[->] (0.48,-5.2) -- (0.48,-5.25);
\draw (1.2,-5.2) node{$m_i$};
\draw[->,line width=1.5pt] (5,-3) -- (6,-3);
\begin{scope} [xshift=15cm]
\draw (0,0) -- (0,-2.5) (0,-3.5) -- (0,-6);
\draw[dashed] (0,2) -- (0,0) (0,-6) -- (0,-8);
\draw (-4,-4.5) arc (-90:90:1.5);
\draw[dashed] (-4,-1.5) arc (90:270:1.5);
\draw (-4,-4.8) arc (-90:90:1.8);
\draw[dashed] (-4,-1.2) arc (90:270:1.8);
\draw[color=white,line width=3pt] (-2.26,-2.53) -- (-2.26,-3.48);
\draw (-2.26,-2.5) -- (0,-2.5) (-2.26,-3.5) -- (0,-3.5);
\draw[->] (-3.9,-1.5) -- (-4,-1.5);
\draw[->] (0,0) -- (0,-1);
\draw (0.6,-1) node{$L_i'$};
\draw[white] (0,-5.5) node{$\bullet$};
\draw (-0.2,-5) .. controls +(-0.5,0) and +(-0.5,0) .. (0,-5.45) .. controls +(0.5,0) and +(0.5,0) .. (0.2,-5);
\draw[->] (0.48,-5.2) -- (0.48,-5.25);
\draw (2.3,-5.2) node{$m_i'=m_i$};
\draw[white] (-2.5,-3.3) node{$\bullet$};
\draw (-2.7,-2.8) .. controls +(-0.5,0) and +(-0.5,-0) .. (-2.5,-3.25) .. controls +(0.5,0) and +(0.5,0) .. (-2.35,-2.8);
\draw[->] (-2.05,-3) -- (-2.05,-2.9);
\draw (-1.3,-3) node{$m_j'$};
\draw[color=white,line width=5pt] (-3,-2) -- (-2,-1);
\draw (-3.45,-1.8) .. controls +(-0.4,-0.5) and +(-0.5,-0.5) .. (-2.8,-1.8) .. controls +(0.5,0.5) and +(0.5,0.4) .. (-3.05,-1.3);
\draw[->] (-2.55,-1.3) -- (-2.55,-1.2);
\draw (-1.8,-1.3) node{$m_j$};
\end{scope}
\end{tikzpicture}
\caption{Effect of a K2 move on the meridians} \label{figK2mer}
\end{center}
\end{figure}
then $[m_k']=[m_k]$ for $k\neq j$ and $[m_j']=[m_j]-[m_i]$.
The induced congruence between the linking matrices is $A(L')=T_{ij}A(L)T_{ji}$, where $T_{k\ell}$ is the transvection matrix defined 
as $T_{k\ell}=I+E_{k\ell}$ and $E_{k\ell}$ is the matrix whose only non-trivial coefficient is a 1 at the $k$-th row and $\ell$-th column. 
If $\b{m}$ (resp. $\b{m'}$) is the presentation of $H_1(M;\Z)$ associated with $L$ (resp. $L'$), we have $T_{ji}\b{m'}=\b{m}$. 
If the component $L_j$ is substracted to $L_i$, then the transvection matrix which appears is $T_{ji}^{-1}=I-E_{ji}$. 
Since any {\em unimodular matrix} (square integral matrix of determinant 1) is a product of transvection matrices, 
we obtain the following statement.
\begin{lemma} \label{lemmaKirby}
 Let $M$ be a 3-manifold. Let $L$ be an oriented surgery presentation of $M$. Let $\b{m}$ be the associated presentation 
of $H_1(M;\Z)$. Consider $A=\tr{P}A(L)P$, where $P$ is a unimodular matrix. Then there is an oriented surgery presentation $L'$ of $M$, 
that can be obtained from $L$ by a sequence of K2 moves, such that $A(L')=A$ and the associated presentation $\b{m'}$ of $H_1(M;\Z)$ 
satisfies $P\b{m'}=\b{m}$.
\end{lemma}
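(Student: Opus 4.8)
The plan is to iterate the single-K2-move computation recorded just above, feeding it with a factorization of $P$ into elementary transvections. The key observation is that the matrix $T_{ji}$ governing a single K2 move appears simultaneously as the congruence matrix on the linking matrix, via $A(L')=\tr{T_{ji}}\,A(L)\,T_{ji}$ (recall $\tr{T_{ji}}=T_{ij}$), and as the change-of-generators matrix, via $T_{ji}\b{m'}=\b{m}$; subtracting $L_j$ from $L_i$ realizes $T_{ji}^{-1}$ in exactly the same fashion. Thus every elementary transvection $I\pm E_{ji}$ (for $i\neq j$) is realized by one K2 move, and in each case one and the same matrix controls both the congruence and the transformation of the induced presentation.

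Next I would factor $P$. Since $P$ is unimodular, it lies in $SL_n(\Z)$, which is generated by the elementary transvections, so $P=T^{(1)}T^{(2)}\cdots T^{(s)}$ with each $T^{(a)}$ of the form $I\pm E_{k\ell}$. Starting from $L$, I apply the K2 moves realizing $T^{(1)},\dots,T^{(s)}$ in this order, producing surgery presentations $L=L^{(0)},L^{(1)},\dots,L^{(s)}=:L'$ of $M$ with induced presentations $\b{m}^{(a)}$. An induction on $a$, whose inductive step is precisely the single-move formulas above, then yields $A(L^{(a)})=\tr{(T^{(1)}\cdots T^{(a)})}\,A(L)\,(T^{(1)}\cdots T^{(a)})$ together with $(T^{(1)}\cdots T^{(a)})\,\b{m}^{(a)}=\b{m}$. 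Taking $a=s$ gives $A(L')=\tr{P}\,A(L)\,P=A$ and $P\b{m'}=\b{m}$, which is the assertion.

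The hard part is only the bookkeeping in this induction: I must check that, as the moves are composed, the congruence matrices accumulate on the right as $T^{(1)}\cdots T^{(a)}$ (with their transposes on the left) while the meridian relations telescope through the very same matrices, so that the two products assemble into $\tr{P}\cdots P$ and into $P$ respectively, rather than into transposes or inverses of one another. This compatibility is guaranteed precisely because the single-move congruence matrix and meridian matrix coincide; granting the standard fact that $SL_n(\Z)$ is generated by the elementary transvections, no genuinely new difficulty arises.
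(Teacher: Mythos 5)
Your proposal is correct and follows essentially the same route as the paper: the paper likewise records the single-move formulas $A(L')=T_{ij}A(L)T_{ji}$ and $T_{ji}\b{m'}=\b{m}$ (with $T_{ji}^{-1}=I-E_{ji}$ realized by subtracting a component) and then concludes by factoring the unimodular matrix $P$ into transvections. The telescoping induction you spell out is exactly the bookkeeping the paper leaves implicit.
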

\begin{corollary}
 Any 3-manifold admits an oriented surgery presentation whose associated linking matrix is admissible.
\end{corollary}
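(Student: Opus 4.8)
The plan is to reduce the corollary to the already-established Lemma \ref{lemmaKirby} and to Kyle's algebraic result. First I would invoke Lickorish \cite{Lick2} and Wallace \cite{Wal} to fix any oriented surgery presentation $L=\sqcup_{1\leq k\leq n}L_k$ of $M$, with linking matrix $A(L)$. By \cite[Lemma 1]{Kyle}, $A(L)$ is congruent over the integers to an admissible linking matrix, so there is a matrix $Q$ invertible over the integers with $\tr{Q}A(L)Q$ admissible. The endgame is then to realize this congruence geometrically via Lemma \ref{lemmaKirby}, whose output $A(L')=\tr{P}A(L)P$ is exactly the desired admissible linking matrix.

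The one point that needs care is a mismatch of determinant conventions: Kyle's lemma only provides $Q$ with $\det Q=\pm1$, whereas Lemma \ref{lemmaKirby} requires a \emph{unimodular} congruence matrix, i.e.\ one of determinant $1$, since only such matrices factor into the transvections realized by K2 moves. To fix this, if $\det Q=-1$ I would replace $Q$ by $P=QD$, where $D$ is the diagonal matrix with a single $-1$ placed in a coordinate belonging to the non-degenerate block of $\tr{Q}A(L)Q$ and $1$'s elsewhere (if no such block exists, then $A(L)$ is congruent to $0$, hence already equal to $0$, and the claim is immediate). Then $\det P=(\det Q)(\det D)=1$, so $P$ is unimodular, while $\tr{P}A(L)P=D\bigl(\tr{Q}A(L)Q\bigr)D$ merely changes signs of some rows and columns and preserves the block shape $\begin{pmatrix} A_0 & 0 \\ 0 & 0\end{pmatrix}$, with $\det(D_0A_0D_0)=(\det D_0)^2\det A_0=\det A_0\neq0$. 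Thus $\tr{P}A(L)P$ is again admissible, and we may assume the congruence matrix $P$ is unimodular.

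Finally I would apply Lemma \ref{lemmaKirby} to $M$, $L$ and $A:=\tr{P}A(L)P$: it produces an oriented surgery presentation $L'$ of $M$, obtained from $L$ by a sequence of K2 moves, with $A(L')=A$ admissible. This is precisely the statement of the corollary.

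I do not expect a genuine obstacle here, as the result is essentially a formal corollary of Lemma \ref{lemmaKirby} together with \cite[Lemma 1]{Kyle}. The only subtlety is the determinant bookkeeping described above—reconciling the $\det=\pm1$ congruence furnished by Kyle with the $\det=1$ hypothesis of Lemma \ref{lemmaKirby}—which the diagonal sign-change trick resolves while keeping admissibility intact.
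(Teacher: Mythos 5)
Your proof is correct and follows exactly the route the paper intends: the corollary is stated there without proof, as an immediate consequence of Lemma \ref{lemmaKirby} together with the fact, quoted from \cite[Lemma 1]{Kyle} in Section \ref{secpre}, that any linking matrix is congruent over the integers to an admissible one. Your determinant bookkeeping, reconciling Kyle's $\det Q=\pm1$ with the $\det P=1$ hypothesis of Lemma \ref{lemmaKirby}, is a valid filling-in of a detail the paper leaves implicit; note only that a sign change on a row and column lying in the zero block acts trivially on the matrix, so the $-1$ of your diagonal matrix $D$ could be placed in any coordinate, which makes your separate treatment of the case $\tr{Q}A(L)Q=0$ unnecessary.
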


  \section{Presentations of $H_1(M;\Z)$, topological version} \label{sectop}

An {\em integral homology sphere}, or {\em $\Z$-sphere}, is a 3-manifold which has the same homology as the standard 3-sphere $S^3$. 
Define oriented surgery presentations on links in $\Z$-spheres as in $S^3$. 
The next proposition is an equivalent of Theorem \ref{thpres} for surgery presentations in $\Z$-spheres. 
We will end the section by deducing Theorem \ref{thpres} from this proposition and Matveev's result for $\Z$-spheres.
  
\begin{proposition} \label{propZHS}
 Let $M$ be a 3-manifold. Let $A$ be an admissible linking matrix. Assume there is an $A$-presentation $\b{\eta}=(\eta_1,..,\eta_n)$ 
of $H_1(M;\Z)$. Then there is a $\Z$-sphere $N$ and an oriented surgery presentation $L\subset N$ of $M$ such that $A(L)=A$ 
and $L$ induces the $A$-presentation $\b{\eta}$.
\end{proposition}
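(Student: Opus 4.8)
The plan is to reverse the surgery: rather than searching for a link in an unknown $\mathbb{Z}$-sphere, I would realize the prescribed data inside the given manifold $M$ and only then surger $M$ into a $\mathbb{Z}$-sphere. Concretely, I look for a framed link $L^{*}=K_{1}\sqcup\cdots\sqcup K_{n}\subset M$ with $[K_{i}]=\eta_{i}$, and I set $N:=M(L^{*})$. By the surgery description recalled in Section~\ref{secpre}, the complement $M\setminus L^{*}$ is also the complement $N\setminus L$ of the dual link $L$ (the cores of the reglued solid tori), so that $N(L)\cong M$; and each meridian $m_{i}$ of $L_{i}$ is a longitude of $K_{i}=L_{i}^{*}$, whence $[m_{i}]=[K_{i}]=\eta_{i}$ exactly as in the description of $\xi_\Gamma$ in Lemma~\ref{lemmaisoind}. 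Thus the statement on the generators holds for free, and the whole problem reduces to choosing the framings of $L^{*}$ so that (I) $N$ is a $\mathbb{Z}$-sphere and (II) $A(L)=A$.

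I would first treat the non-degenerate case $A=A_{0}$, where $M$ is a rational homology sphere; since $A$ is admissible we may take $P=I$, so the second bullet of the $A$-presentation reads $\varphi_{M}(\eta_{i},\eta_{j})=(A_{0}^{-1})_{ij}\bmod\Z$. Represent $\eta_{1},\dots,\eta_{r}$ by disjoint knots and record the rational linking matrix $\Lambda$ of $L^{*}$, with $\Lambda_{ij}=lk(K_{i},K_{j})$ and the diagonal fixed by the chosen parallels. Its reduction $\Lambda\bmod\Z$ is precisely the matrix of values $\varphi_{M}(\eta_{i},\eta_{j})$, which the hypothesis pins down to $A_{0}^{-1}\bmod\Z$, while the integer parts are unconstrained: twisting a parallel changes a diagonal entry by any integer, and a crossing change between two distinct components (or pushing one component through a Seifert surface of another) changes an off-diagonal entry by any integer. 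Hence I can arrange $\Lambda=A_{0}^{-1}$ on the nose. A standard surgery computation then gives $|H_{1}(N)|=|H_{1}(M)|\,|\det\Lambda|=|\det A_{0}|\cdot|\det A_{0}|^{-1}=1$, so $N$ is a $\mathbb{Z}$-sphere, and surgery duality gives $A(L)=\Lambda^{-1}=A_{0}$; the normalisation in the second bullet then holds automatically, since the linking pairing of $N(L)$ is read off $A_{0}^{-1}$ in the meridian basis.

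It remains to incorporate the free generators $\eta_{r+1},\dots,\eta_{n}$, corresponding to the vanishing block of $A$. Here I would adjoin components $K_{r+1},\dots,K_{n}$ with $[K_{k}]=\eta_{k}$ of infinite order, framed so that the dual components $L_{k}\subset N$ become null-homologous, $0$-framed, and algebraically unlinked from all others; note that these $L_{k}$ need \emph{not} be unknotted or split, so they may encode a non-product free part of $M$ (e.g.\ $M=T^{3}$). Their role is to trade the free rank of $H_{1}(M)$ for exactly the relations needed to reach $H_{1}(N)=0$, while leaving a genuine zero block in $A(L)$. One clean way to organise (I) and (II) simultaneously in the general case is to push $L^{*}$ into an auxiliary presentation $M=S^{3}(G)$ with linking matrix $B$: then $N=S^{3}(G\sqcup L^{*})$ has linking matrix $\mathcal{B}=\begin{pmatrix} B & D\\ \tr{D} & E\end{pmatrix}$, condition (I) becomes $\det\mathcal{B}=\pm1$, condition (II) becomes the prescription of the $L^{*}$-block of $\mathcal{B}^{-1}$ as $\pm A$, and $[K_{i}]=\eta_{i}$ fixes $D$ modulo $B$; the $A$-presentation hypothesis is exactly what makes this integral block-matrix system solvable.

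The main obstacle is this last, exact matching for the degenerate part. Forcing $A(L)$ to equal $A$ \emph{on the nose} — not merely congruent to it, nor a stabilisation of it — while simultaneously guaranteeing $H_{1}(N)=0$ is delicate precisely because the free classes carry no linking numbers in $M$, so the vanishing of the corresponding rows and columns of $A(L)$ cannot be detected inside $M$ and must be produced by a careful choice of framings (equivalently, by solving the integral system above). The torsion block, by contrast, is forced modulo $\Z$ by the linking pairing and freely adjustable in its integer part, so it offers little resistance once the duality identification $[m_{i}]=[L_{i}^{*}]$ is in hand.
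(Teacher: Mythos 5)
Your overall strategy coincides with the paper's: represent $\b{\eta}$ by a framed link $K_1\sqcup\cdots\sqcup K_n\subset M$, surger $M$ into a $\Z$-sphere $N$, and take $L$ to be the dual link, whose meridians $\ell(K_i)$ carry the classes $\eta_i$. Your non-degenerate case is essentially sound: arranging $lk(K_i,K_j)=(A_0^{-1})_{ij}$ exactly is also the paper's first step (it band-sums meridians of the $K_j$ into $K_i$ rather than using crossing changes), and where you invoke the order formula $|H_1(N)|=|H_1(M)|\,|\det\Lambda|$ and the duality $A(L)=\Lambda^{-1}$ as standard facts, the paper proves both by hand, producing $2$-chains $\Sigma_i$ with $\partial\Sigma_i=\sum_{j}a_{ij}\ell(K_j)-m(K_i)$ and capping them off in the reglued tori to compute $lk(\hat{K}_i,\hat{K}_j)=-a_{ij}$.

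The genuine gap is the degenerate block, which is unavoidable whenever $H_1(M;\Z)$ has positive rank, and which your proof does not close: you reduce it to choosing $D$ (constrained modulo $B$) and a symmetric $E$ so that $\mathcal{B}=\begin{pmatrix} B & D\\ \tr{D} & E\end{pmatrix}$ is unimodular with a prescribed block of $\mathcal{B}^{-1}$, assert without proof that the $A$-presentation hypothesis makes this integral system solvable, and yourself flag it as the main obstacle. That solvability claim is essentially the whole difficulty; its algebraic resolution is close to the paper's Proposition \ref{proptech}, which moreover only achieves $A$ up to stabilization and unimodular congruence, whereas the present proposition demands $A(L)=A$ on the nose. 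The paper's missing idea makes the free part painless rather than delicate: by Poincar\'e duality there are closed oriented surfaces $S_i$, $m+1\leq i\leq n$, with $\langle S_i,\eta_j\rangle=\delta_{ij}$. Then the meridian $m(K_i)$ of each free-part knot bounds $\pm S_i\cap X$ in the exterior $X$ of the whole link, so $[m(K_i)]$ dies in $H_1(N)$, and the dual knot $\hat{K}_i$ --- which is homologous to $-m(K_i)$ in its solid torus and framed by $\ell(\hat{K}_i)=-m(K_i)$ --- is null-homologous in the complement of the entire dual link. Hence its row and column of $A(L)$ vanish for an \emph{arbitrary} framing of $K_i$: no careful framing choice and no matrix system is needed, contrary to your final paragraph. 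The vanishing you feared ``cannot be detected inside $M$'' is detected precisely by these dual surfaces, which the paper also uses (with tubing) to embed the chains $\Sigma_i$ in the exterior.
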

\begin{proof}
The idea of this proof is to kill the homology of $M$ by surgeries in order to obtain a $\Z$-sphere, and to consider the inverse surgeries. 

Write $A=\begin{pmatrix} A_0 & 0 \\ 0 & 0 \end{pmatrix}$ where $A_0=(a_{ij})_{1\leq i,j\leq m}$ is a non-degenerate linking matrix.
For $1\leq i\leq n$, let $K_i$ be a framed knot in $M$ whose homology class is $\eta_i$. Choose the $K_i$ pairwise disjoint. 
Let $T(K_i)$ be a tubular neighborhood of $K_i$, let $m(K_i)\subset \partial T(K_i)$ be an oriented meridian of $K_i$, 
and let $\ell(K_i)\subset \partial T(K_i)$ be the fixed parallel of $K_i$. 

For $1\hspace{-1pt}\leq \hspace{-1pt}i,j\hspace{-1pt}\leq \hspace{-1pt}m$, we have $lk(K_i,K_j)\equiv(A_0^{-1})_{ij}\ mod\ \Z$. 
Since adding to a $K_i$ some meridians of the $K_j$ does not modify the homology class of $K_i$ 
in $M$ (and since we can modify the choice of the $\ell(K_i)$), we may assume, for all $1\leq i,j\leq m$, that: 
$$lk(K_i,K_j)=(A_0^{-1})_{ij}.$$
Since the columns of $A$ give relations on the $\eta_i$, there are 2-chains $\Sigma_i$ 
in $X_0=M\setminus \sqcup_{1\leq i\leq m}Int(T(K_i))$ such that 
$$\partial \Sigma_i=\sum_{j=1}^m a_{ij}\ell(K_j)+\sum_{j=1}^m b_{ij}m(K_j),$$  
for some integers $b_{ij}$. 

It follows from Poincar\'e duality that there are closed oriented surfaces $S_i$, for $m+1\leq i\leq n$, such that $$\langle S_i,\eta_j\rangle=\delta_{ij}$$ 
for $m+1\leq i\leq n$ and $1\leq j\leq n$, where $\delta_{ij}$ is the Kronecker symbol. Adding if necessary copies of the $S_j$ to the $\Sigma_i$, 
and tubing around the $T(K_j)$, we can (and we do) assume that the $\Sigma_i$ are embedded in $X=X_0\setminus\sqcup_{m+1\leq j\leq n} Int(T(K_j))$.

Let us compute the integers $b_{ij}$. For $1\leq k\leq m$, let $\ell(K_k)_{ext}$ be a parallel copy of $\ell(K_k)$ in the interior of a regular neighborhood 
of $\partial T(K_k)$ in $X$. We have $\langle \Sigma_i,\ell(K_k)_{ext}\rangle=-b_{ik}$. On the other hand, this algebraic intersection number is equal 
to the linking number $lk(\partial \Sigma_i,\ell(K_k)_{ext})=\sum_{j=1}^m a_{ij} lk(K_j,K_k)=\delta_{ik}$. Hence $b_{ik}=-\delta_{ik}$ and 
$$\partial \Sigma_i=\sum_{j=1}^m a_{ij}\ell(K_j)-m(K_i).$$ 

Now let $N$ be the 3-manifold obtained from $M$ by surgery along the framed link $\sqcup_{1\leq i\leq n} K_i$. The group $H_1(N;\Z)$ is generated by the 
$[m(K_i)]$, which are easily seen to be trivial by considering the surfaces $\Sigma_i$ and $S_i$. Hence $N$ is an integral homology sphere. 
For $1\leq i \leq n$, let $\hat{K}_i\subset N$ be the core of the torus $T(\hat{K}_i)$ reglued during the surgery. Orient each $\hat{K}_i$ so that 
it is homologous to $-m(K_i)$ in $T(\hat{K}_i)$, and parallelize $\hat{K_i}$ with the fixed parallel $\ell(\hat{K}_i)=-m(K_i)$. 
Note that $\ell(K_i)$ is an oriented meridian of $\hat{K}_i$. 
The manifold $M$ is obtained from $N$ by surgery along the framed link $L=\sqcup_{1\leq i\leq n}\hat{K}_i$, 
and the associated generators of $H_1(M;\Z)$ are the $[\ell(K_i)]=\eta_i$. 

Let us compute the linking matrix $A(L)$. 
For $1\leq i\leq m$, construct from $\Sigma_i$ another 2-chain $\hat{\Sigma}_i$ by adding $a_{ij}$ meridian disks in $T(\hat{K}_j)$ for $1\leq j\leq m$, 
and an annulus in $T(\hat{K}_i)$, so that $\partial \hat{\Sigma}_i=\hat{K}_i$. For $1\leq i,j\leq m$, 
we have $lk(\hat{K}_i,\hat{K}_j)=\langle\hat{\Sigma}_i,\ell(\hat{K}_j)\rangle=-a_{ij}$. 
Similarly, we have $lk(\hat{K}_i,\hat{K}_j)=0$ when $1\leq i \leq m$ and $m+1\leq j\leq n$. For $m+1\leq i\leq n$, $m(K_i)$ bounds the surface $-S_i\cap X$ 
which does not meet the $\hat{K}_j$, hence $lk(\hat{K}_i,\hat{K}_j)=0$ for all $1\leq j\leq n$. Finally, $A(L)=A$.
\end{proof}

\fproofof{Theorem \ref{thpres}}
Let $N$ and $\hat{L}\subset N$ by the $\Z$-sphere and the oriented surgery presentation provided by Proposition \ref{propZHS}. 
By \cite[Theorem 2]{Mat}, there is a Y-link $\Gamma\subset S^3$ 
such that $N\cong S^3(\Gamma)$. The genus~3 handlebodies in $N$ reglued during the surgery can be viewed as regular neighborhoods of Y-graphs, 
hence we can assume that the link $\hat{L}$ does not meet them, and we can consider the copy of $\hat{L}$ in $S^3$. Write $\Gamma$ as the disjoint union 
of Y-graphs $\Gamma_i$ for $1\leq i\leq s$, and let $L\subset S^3$ be the framed link defined by the union of all the six-component links 
associated with the $\Gamma_i$'s and of $\hat{L}$. 
\begin{figure}[hbt] 
\begin{center}
\begin{tikzpicture} [scale=0.15]
\newcommand{\bras}[1]{
\draw[rotate=#1] (0,-1.5) circle (2.5);
\draw[rotate=#1,white,line width=8pt] (-0.95,-4) -- (0.95,-4);
\draw[rotate=#1] (0,-11) circle (3) (1,-3.9) -- (1,-7.6);
\draw[rotate=#1,->] (-0.1,-14) -- (0,-14);
\draw[rotate=#1,white,line width=6pt] (-1,-5) -- (-1,-8.7);
\draw[rotate=#1] {(-1,-3.9) -- (-1,-8.7) (-1,-8.7) arc (-180:0:1)};
\draw[rotate=#1,->] (-1,-6) -- (-1,-7);}
\bras{0}
\draw (0,-14) node[below] {$L_{3s+3i}$};
\draw (-1,-6) node[left] {$L_{3i}$};
\draw [white,line width=6pt,rotate=120] (0,-1.5) circle (2.5);
\bras{120}
\draw (18,7.5) node {$L_{3s+3i-2}$};
\draw (9,0) node {$L_{3i-2}$};
\draw [rotate=-120,white,line width=6pt] (-1.77,0.27) arc (135:190:2.5);
\draw [rotate=-120,white,line width=6pt] (1.77,0.27) arc (45:90:2.5);
\bras{-120}
\draw (-18,7.5) node {$L_{3s+3i-1}$};
\draw (-2,6) node {$L_{3i-1}$};
\draw [white,line width=6pt] (-1.77,0.27) arc (135:190:2.5);
\draw [white,line width=6pt] (1.77,0.27) arc (45:90:2.5);
\draw (-1.77,0.27) arc (135:190:2.5);
\draw (1.77,0.27) arc (45:90:2.5);
\end{tikzpicture}
\end{center}
\caption{Six-component link associated with $\Gamma_i$}\label{fignb}
\end{figure}
Number and orient the components of $L$ as indicated in Figure \ref{fignb} for the $6s$ first components, 
and set $L_{6s+i}=\hat{L}_i$. The linking matrix of $L$ is:
$$A(L)=\begin{pmatrix} 0 & I_{3s} & 0 \\ I_{3s} & \star & C \\ 0 & \tr{C} & A \end{pmatrix},$$
for some matrix $C$ of size $3s\times n$. The associated presentation of $H_1(M;\Z)$ is $(\b{\gamma},0,..,0,\b{\eta})$, 
where $\b{\gamma}=-C\b{\eta}$. 
Define a congruence matrix $P=\begin{pmatrix} I_{3s} & 0 & -C \\ 0 & I_{3s} & 0 \\ 0 & 0 & I_n \end{pmatrix}$.
Note that $P$ is unimodular. By Lemma \ref{lemmaKirby}, $L$ can be modified by Kirby moves to obtain an oriented surgery presentation $L'$, 
with linking matrix $A(L')=\tr{P}A(L)P=\begin{pmatrix} B & 0 \\ 0 & A \end{pmatrix}$ for some linking matrix $B$ of 
determinant $-1$, and whose associated presentation of $H_1(M;\Z)$ is $\b{\eta}'=(0,..,0,\b{\eta})$. 

Performing if necessary a stabilization, we can assume that $B$ is the matrix of an indefinite odd unimodular bilinear symmetric form over some power of $\Z$, 
and it follows that it is congruent to a diagonal matrix with $\pm1$'s on the diagonal (see for instance \cite[Chap. 2, Th. 4.3]{MilHu}). 
Applying once again Lemma \ref{lemmaKirby}, we obtain an oriented surgery presentation $L''$ whose linking matrix is a stabilization 
of $A$, and whose associated presentation of $H_1(M;\Z)$ is still $\b{\eta}'$.
\fin

    \section{Presentations of $H_1(M;\Z)$, algebraic version} \label{secalg}

In this section, we give an alternative proof of Theorem \ref{thpres}, more technical than the previous one, but also more constructive, 
in the sense that if two 3-manifolds $M$ and $N$ are given by surgery presentations in $S^3$, and if an isomorphism $\xi:H_1(M;\Z)\cong H_1(N;\Z)$ 
preserving the linking pairing is fixed, then the following proof provides Kirby moves which lead to the situation of Proposition \ref{propMuNa}, 
from which one can write down the Y-link realizing $\xi$.
    
Lemma \ref{lemmaKirby} implies that Theorem~\ref{thpres} can be proved by showing that any two $A$-presenta\-tions of a given linked group 
are related, up to stabilizations, by a congruence which induces the required change of generators. We first treat the case of a finite 
linked group. The following proposition is a refinement of \cite[Theorem 4.1]{Dur}.

\begin{proposition} \label{proptech}
 Let $H$ be a finite linked group. Let $A$ and $B$ be non-degenerate linking matrices. Assume there are an $A$-presentation 
$\b{\gamma}=(\gamma_1,..,\gamma_n)$ and a $B$-presentation $\b{\eta}=(\eta_1,..,\eta_m)$ of $H$. Then there are stabilizations 
$\tilde{A}$ of $A$ and $\tilde{B}$ of $B$, and a unimodular matrix $P$, such that $\tilde{A}=\tr{P}\tilde{B}P$ and $P\b{\gamma'}=\b{\eta'}$, 
where $\b{\gamma'}=(0,..,0,\gamma_1,..,\gamma_n)$ and $\b{\eta'}=(0,..,0,\eta_1,..,\eta_m)$.
\end{proposition}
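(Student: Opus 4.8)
The plan is to reduce the statement to integer linear algebra and then follow Durfee's congruence argument \cite{Dur}, threading the prescribed change of generators through the construction in order to gain the refinement. Since $A$ ($n\times n$) and $B$ ($m\times m$) are non-degenerate, one may take $P=I$, $A_0=A$ in the definition of an $A$-presentation, so that $\b{\gamma}$ being an $A$-presentation means $H\cong\Z^n/A\Z^n$ with the $\gamma_i$ the images of the standard basis and $\varphi(\gamma_i,\gamma_j)=(A^{-1})_{ij}\bmod\Z$, and similarly for $B,\b{\eta}$. As both families generate $H$, I would record integer matrices $V$ ($m\times n$) and $W$ ($n\times m$) expressing $\b{\gamma}$ in terms of $\b{\eta}$ and conversely. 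The statements that each family presents $H$ then translate into $WV\equiv I$ modulo the relations of $A$ and $VW\equiv I$ modulo the relations of $B$, while preservation of the pairing gives the crucial integrality relation $A^{-1}-\tr V B^{-1}V\in M_n(\Z)$ (writing $V_{ki}$ for the coefficient of $\eta_k$ in $\gamma_i$).

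Before constructing anything I would isolate a short consistency lemma: if $\tilde B$ presents $H$ with generators $\b{\eta'}$ and $\tilde A=\tr P\tilde B P$ with $P$ unimodular, then $\tilde A$ automatically presents $H$ with generators $P^{-1}\b{\eta'}$, and the pairing condition for $\tilde A$ holds automatically. This is a one-line computation: applying the presentation map of $\b{\eta'}$ after multiplying by $\tr P$ and using $\tr{(P^{-1})}\tr P=I$ shows $\tilde A$ kills the right relations, and the pairing values transform correctly. The consequence is that the entire content of the proposition is to produce a unimodular congruence $\tilde A=\tr P\tilde B P$ whose \emph{induced} change of generators $P^{-1}\b{\eta'}$ equals the \emph{prescribed} $\b{\gamma'}$; this is precisely why $P$ must be built out of the given matrix $V$ rather than from an arbitrary pairing isomorphism, and it is the point at which the refinement over \cite[Theorem 4.1]{Dur} is obtained.

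I would then assemble $P$ as a block matrix of size $N$ built from $V$, $W$ and the auxiliary integer matrices furnished by the three relations above, padding with identity/hyperbolic blocks (the stabilization) to absorb the fact that $V$ is neither square nor invertible. Integrality of the off-diagonal correction blocks is exactly what the pairing relation $A^{-1}-\tr V B^{-1}V\in M_n(\Z)$ guarantees, and unimodularity of $P$ follows from its triangular, Schur-complement shape. A direct computation then gives $\tr P\tilde B P=\tilde A$, and the consistency lemma identifies the induced generators as $\b{\gamma'}$, the padding generators being trivial in $H$ because they correspond to the $\pm1$ diagonal blocks — which accounts for the leading zeros in $\b{\gamma'}$ and $\b{\eta'}$.

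The main obstacle is the congruence construction itself, and specifically forcing the stabilization to be a \emph{diagonal} block with entries $\pm1$ rather than merely some unimodular symmetric form: the block assembled above is unimodular but not a priori diagonal, so as in the topological proof I would stabilize once more to make it indefinite and odd and invoke the classification of such forms (\cite[Chap. 2, Th. 4.3]{MilHu}) to diagonalize it over $\Z$ into $\pm1$'s, checking that this final diagonalization is a congruence fixing the already-correct generators. Carrying the prescribed $V$ through all of these steps, so that the isomorphism realized at the end is the given one and not just some pairing isomorphism, is the delicate bookkeeping that distinguishes this statement from Durfee's and that I expect to be the most technical part.
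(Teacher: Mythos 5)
Your setup is sound and matches the paper's own framing: the translation $H\cong\Z^n/A\Z^n$ with $\varphi(\gamma_i,\gamma_j)=(A^{-1})_{ij}\bmod\Z$, the matrices $V,W$ with $WV\equiv I$ modulo the relations, the key integrality relation $A^{-1}-\tr{V}B^{-1}V\in\M_n(\Z)$, the observation that a congruence $\tilde{A}=\tr{P}\tilde{B}P$ transports presentations (so the whole content is forcing the induced change of generators to equal the prescribed one), and the endgame of stabilizing to an odd indefinite unimodular block and diagonalizing it into $\pm1$'s via \cite[Chap.~2, Th.~4.3]{MilHu} are all exactly the ingredients of the paper's proof. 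But there is a genuine gap at the center: the step ``assemble $P$ as a block matrix built from $V$, $W$ and the auxiliary integer matrices, padding with identity/hyperbolic blocks; a direct computation then gives $\tr{P}\tilde{B}P=\tilde{A}$'' is not a construction, and no triangular or Schur-complement assembly from $V$ and $W$ alone will produce such a $P$. The existence of \emph{any} integral congruence between stabilizations of $A$ and $B$ is precisely the hard content of stable equivalence (Kneser--Puppe \cite{KnePu}, Durfee \cite{Dur}); since $V$ and $W$ intertwine the forms only modulo integers, formal padding cannot force an exact integral congruence, and your plan never says where the exact identity comes from.

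What the paper does at this point --- and what your sketch is missing --- is Durfee-style lattice gluing. From the map $f:R_1^\sharp\to R_2^\sharp$, $g_i\mapsto\sum_j u_{ij}h_j$ (your $V$, transposed), one builds the unimodular ``graph'' lattice $R=R_1\oplus R_1^\sharp$ with twisted form $\psi_R((x,y),(x',y'))=\psi_A(x,y')+\psi_A(x',y)+\psi_A(y,y')-\psi_B(f(y),f(y'))$, integral exactly because of your relation $A^{-1}-\tr{V}B^{-1}V\in\M_n(\Z)$, with Gram matrix $\begin{pmatrix}0&I\\ I&*\end{pmatrix}$ --- the only place where hyperbolic padding legitimately enters. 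One then embeds $R_1^\sharp$ diagonally into $R\oplus R_2^\sharp$ by $\iota(y)=(0,y,f(y))$ and proves a genuine sub-lemma: the orthogonal complement $S=(\iota(R_1^\sharp))^\perp$ is an integral \emph{unimodular} lattice and $S\oplus\iota(R_1^\sharp)=R\oplus R_2^\sharp$, whence $S\oplus\iota(R_1)=R\oplus R_2$ by dualizing. The matrix $P$ is then the change of basis between $(\b{s},\iota(\b{e}))$ and $((\b{e},\b{g}),\b{\varepsilon})$ of this single lattice: it is unimodular automatically, not by any triangular shape, and its first block-column (a basis of $S$) is not expressible by padding from $V,W$. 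Even after that, the verification $P\b{\gamma'}=\b{\eta'}$ requires the non-obvious congruence $V\equiv B^{-1}\tr{U}A\bmod\M_{m,n}(\Z)A$, derived from $VU\equiv I\bmod\M_m(\Z)B$. So your emphasis is inverted: the final diagonalization into $\pm1$'s, which you single out as the main obstacle, is the easy endgame (identical in the paper), while the ``direct computation'' you defer is the theorem itself.
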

\begin{proof}
 We first fix some formalism. A {\em linked lattice} is a finitely generated free $\Z$-module $R$ equipped with a non-degenerate symmetric 
bilinear form $\psi_R: R\times R \to \Q$. It is {\em integral} if $\psi_R$ takes values in $\Z$. 
Let $V$ be a finite dimensional $\Q$-vector space equipped with a non-degenerate symmetric bilinear 
form $\psi:V\times V\to\Q$. A {\em lattice in V} is a free $\Z$-submodule $R$ of maximal rank. It is linked by the form $\psi$. Define the dual module of $R$ by:
$$R^\sharp=\{x\in V \textrm{ such that } \psi(x,y)\in\Z\textrm{ for all } y\in R\}.$$
The group $R^\sharp$ naturally identifies with $\hom(R;\Z)$ via $x\mapsto(y\mapsto \psi(x,y))$. Note that $(R^\sharp)^\sharp=R$. 
The lattice $R$ is {\em unimodular} if $R^\sharp=R$. 
If the linked lattice $R$ is integral, then $R\subset R^\sharp$, and $\psi$ induces a linking pairing on $R^\sharp/R$. 

Set $V_1=\Q^n$ and denote by $\b{e}=(e_1,..,e_n)$ its canonical basis. Equip $V_1$ with the symmetric bilinear form $\psi_A$ given 
by the matrix $A$ in the basis $\b{e}$. Set $R_1=\Z^n\subset \Q^n$. Let $\b{g}=(g_1,..,g_n)$ be the basis of the dual lattice $R_1^\sharp$ dual to $\b{e}$. 
Note that $g_i$ is given in the basis $\b{e}$ of $V_1$ by the $i$-th column of $A^{-1}$. We have an identification of linked groups $H\cong R_1^\sharp/R_1$ 
given by $\gamma_i\mapsto \bar{g}_i$, where $\bar{g}_i$ is the class of $g_i$ modulo $R_1$. 

Similarly, using the matrix $B$, define a $\Q$-vector space $V_2=\Q^m$ with canonical basis 
$\b{\varepsilon}=(\varepsilon_1,..,\varepsilon_m)$, a symmetric bilinear form $\psi_B$ on $V_2$, a linked lattice 
$R_2=\Z^m\subset V_2$ , a dual lattice $R_2^\sharp$ with a basis $\b{h}=(h_1,..,h_m)$ dual to $\b{\varepsilon}$, 
and an identification of the linked groups $H$ and $R_2^\sharp/R_2$ identifying $\eta_i$ with $\bar{h}_i$.

In $H$, we have $\gamma_i=\sum_{j=1}^m u_{ij}\eta_j$ and $\eta_i=\sum_{j=1}^n v_{ij}\gamma_j$, for some integers $u_{ij}$ and $v_{ij}$. 
Define $f:R_1^\sharp\to R_2^\sharp$ by $g_i\mapsto\sum_{j=1}^m u_{ij}h_j$. The map $f$ induces an isomorphism 
$$\bar{f}:R_1^\sharp/R_1\iso R_2^\sharp/R_2$$ which is the identity on $H$ via the given identifications. 

Set $R=R_1\oplus R_1^\sharp$. Define a linked lattice structure on $R$ by:
$$\psi_R((x,y),(x',y'))=\psi_A(x,y')+\psi_A(x',y)+\psi_A(y,y')-\psi_B(f(y),f(y')).$$ 
The form $\psi_R$ takes values in $\Z$ since $\bar{f}$ is an isomorphism of linked groups. 
The matrix of $\psi_R$ in the basis $(\b{e},\b{g})$ is of the form $\begin{pmatrix} 0 & I \\ I & * \end{pmatrix}$. Hence 
$R$ is a unimodular linked lattice. 

Consider the linked lattice $R\oplus R_2^\sharp$ equipped with the form $\Psi=\psi_R\oplus\psi_B$. 
Define $\iota:R_1^\sharp\to R\oplus R_2^\sharp$ by $y\mapsto(0,y,f(y))$. The map $\iota$ is injective and respects the bilinear forms. 
Set $S=(\iota(R_1^\sharp))^\bot\subset R\oplus R_2^\sharp$. Let us check that $S\subset R\oplus R_2$. Let $(x,y,z)\in S$. 
For $r\in R_1^\sharp$, $\Psi((x,y,z),(0,r,f(r)))=0$ implies $\psi_B(z,f(r))\in\Z$. Now each class of $R_2^\sharp$ modulo $R_2$ 
contains an element $f(r)$, hence $\psi_B(z,s)\in\Z$ for all $s\in R_2^\sharp$, {\em i.e.} $z\in R_2$. 
Hence $S$ equipped with the restriction of $\Psi$ is an integral linked lattice. Define a map 
$\omega:S\oplus R_1^\sharp\to R\oplus R_2^\sharp$ as the direct sum of the inclusion $S\hookrightarrow R\oplus R_2^\sharp$ and of $\iota$.
\begin{lemma}\ 
 \begin{itemize}
  \item The linked lattice $S$ is unimodular.
  \item The map $\omega:S\oplus R_1^\sharp\to R\oplus R_2^\sharp$ is an isomorphism which respects the bilinear forms 
   and identifies $S\oplus R_1$ with $R\oplus R_2$.
 \end{itemize}
\end{lemma}
\begin{proof}
We first prove that $\omega$ is an isomorphism of linked lattices, {\em i.e.} that $R\oplus R_2^\sharp=S\oplus\iota(R_1^\sharp)$. 
Since $A$ is non-degenerate and $S=(\iota(R_1^\sharp))^\bot$, we have $S\cap\iota(R_1^\sharp)=\{0\}$. Hence it suffices to prove 
$R\oplus R_2^\sharp=S+\iota(R_1^\sharp)$. 

We have $R\oplus R_2=S+R_1\subset R\oplus R_2^\sharp$, where $R_1$ is the first component of $R=R_1\oplus R_1^\sharp$. Indeed, for 
$(x,y,z)\in R\oplus R_2=R_1\oplus R_1^\sharp\oplus R_2$, define $\rho:R_1^\sharp\to\Z$ by 
$u\mapsto\psi_A(y,u)-\psi_B(f(y),f(u))+\psi_B(z,f(u))$. Since $\rho\in\hom(R_1^\sharp;\Z)$, there is $w\in R_1$ such that 
$\rho(u)=-\psi_A(w,u)$. It follows that $(w,y,z)\in S$, thus $(x,y,z)\in S+R_1$. 

We have $R\oplus R_2^\sharp=(R\oplus R_2)+\iota(R_1^\sharp)$. Indeed, if $y\in R_2^\sharp$, there is $y_0\in R_2$ such that 
$y=y_0+f(x)$ for some $x\in R_1^\sharp$. Hence $(0,0,y)=(0,-x,y_0)+(0,x,f(x))$.

Finally $R\oplus R_2^\sharp=S+R_1+\iota(R_1^\sharp)$. Now, for $x\in R_1$, $x-\iota(x)\in S$. 
Hence $R\oplus R_2^\sharp=S+\iota(R_1^\sharp)=S\oplus\iota(R_1^\sharp)$.

To show that $S$ is unimodular, we prove that the map $S\to\hom(S;\Z)$ defined by $x\mapsto(y\mapsto\Psi(x,y))$ is bijective. 
Injectivity follows from $R\oplus R_2^\sharp=S\oplus^\bot\iota(R_1^\sharp)$. Let us check surjectivity. 
Let $\phi\in\hom(S;\Z)$. Extend it into $\tilde{\phi}\in\hom(R\oplus R_2^\sharp;\Z)$ by $0$ on $\iota(R_1^\sharp)$. 
Then $\tilde{\phi}=\Psi(v,.)$ for some $v\in(R\oplus R_2^\sharp)^\sharp=R\oplus R_2$. Since $\tilde{\phi}_{|\iota(R_1^\sharp)}=0$, 
$v\in S$ and $\phi=\Psi(v,.)$.

Finally, taking the dual lattices in the equality $R\oplus R_2^\sharp=S\oplus\iota(R_1^\sharp)$, we obtain $R\oplus R_2=S\oplus \iota(R_1)$ 
since $R$ and $S$ are unimodular.
\end{proof}

Back to the proof of the proposition, fix a basis $\b{s}$ of $S$. The form $\Psi$ on $S\oplus R_1\cong R\oplus R_2$ is given by the matrices 
$$\hat{A}=\begin{pmatrix} C & 0 \\ 0 & A \end{pmatrix} \textrm{ in } (\b{s},\iota(\b{e})) \textrm{ and }
\hat{B}=\begin{pmatrix} D & 0 \\ 0 & B \end{pmatrix} \textrm{ in } ((\b{e},\b{g}),\b{\varepsilon}),$$ 
where $C$ and $D$ are unimodular matrices. We have $\tr{P}\hat{B}P=\hat{A}$, where 
$$P=\begin{pmatrix} * & 0 \\ * & A \\ * & F \end{pmatrix},$$ and $F$ is the matrix of $f$ is the bases $\b{e}$ and $\b{\varepsilon}$. 
Let us prove that $P\b{\gamma'}=\b{\eta'}$. 

Set $U=(u_{ij})_{\substack{1\leq i\leq n \\ 1\leq j\leq m}}$ and $V=(v_{ij})_{\substack{1\leq i\leq m \\ 1\leq j\leq n}}$. 
The matrix of $f$ in the bases $\b{g}$ and $\b{h}$ is~$\tr{U}$. Hence $F=B^{-1}\tr{U}A$. 
Moreover, since $\bar{f}$ is an isomorphism of linked groups, we have $A^{-1}=UB^{-1}\tr{U}\ mod\ \M_n(\Z)$. 

For $1\leq i\leq n$, $\eta_i=\sum_{k=1}^m(VU)_{ik}\eta_k$ in $H$. Hence $\sum_{k=1}^m(VU)_{ik}h_k-h_i\,\in R_2$. 
Thus $VU=I\ mod\ \M_m(\Z)B$. Finally:
\begin{eqnarray*}
 V&=&VUB^{-1}\tr{U}A\ mod\ \M_{m,n}(\Z)A \\ &=& B^{-1}\tr{U}A\ mod\ \M_{m,n}(\Z)A \\ &=& F \ mod\ \M_{m,n}(\Z)A.
\end{eqnarray*}
Hence there is a matrix $G$ such that $V=F+GA$, and
$$P\b{\gamma'}=\begin{pmatrix} * & 0 \\ * & A \\ * & V-GA \end{pmatrix} \begin{pmatrix} 0 \\ \b{\gamma} \end{pmatrix}=
\begin{pmatrix} 0 \\ A\b{\gamma} \\ (V-GA)\b{\gamma} \end{pmatrix}=\begin{pmatrix} 0 \\ 0 \\ \b{\eta} \end{pmatrix} \textrm{ in } H^{2n+m}.$$

It remains to modify $\hat{A}$ and $\hat{B}$ into stabilizations of $A$ and $B$ without acting on the presentations $\b{\gamma'}$ 
and $\b{\eta'}$. 

For a unimodular linked lattice $Z$ with form $\psi_Z$, it is known that if $\psi_Z$ is indefinite (there is $z\in Z$ 
such that $\psi_Z(z,z)=0$) and odd (there is $z\in Z$ such that $\psi_Z(z,z)$ is odd), then there is a basis of $Z$ in which 
the matrix of $\psi_Z$ is diagonal (see for instance \cite[Chap. 2, Th. 4.3]{MilHu}). Making if necessary a direct sum of $R$ and $S$ with 
a copy of $\Z$ equipped with a form given by $1$ or $-1$ on a generator, we can assume that $R$ and $S$ are equipped with an odd indefinite 
form. Hence the matrices $C$ and $D$ are congruent to diagonal matrices with $\pm1$ on the diagonal. Applying these 
congruences on $\hat{A}$ and $\hat{B}$, we obtain stabilizations $\tilde{A}$ and $\tilde{B}$ of $A$ and $B$ related by a congruence 
such that the congruence matrix acts on $\b{\gamma'}$ as $P$.
\end{proof}

\sproofof{Theorem \ref{thpres}}
Thanks to Lemma \ref{lemmaKirby}, and recalling that any linking matrix is congruent to an admissible one, we may assume that the linking matrix $A$ is admissible. 
Write $A=\begin{pmatrix} B & 0 \\ 0 & 0 \end{pmatrix}$, where $B$ is a non-degenerate linking matrix. 
Let $L$ be an oriented surgery presentation of $M$ such that 
$A(L)=\begin{pmatrix} B_L & 0 \\ 0 & 0 \end{pmatrix}$ where $B_L$ is a non-degenerate linking matrix. Let $\b{m}$ be the associated 
presentation of $H_1(M;\Z)$. 
Let $\beta$ be the rank of $H_1(M;\Z)$, and let $k$ (resp. $\ell$) be the size of $B$ (resp. $B_L$). 
We shall modify the presentation $\b{\eta}$ so that the $\beta$ last generators coincide with those of $\b{m}$. 
There are matrices $C$ and $D$, with integral coefficients, such that $\det(D)=\pm1$, and 
$$\begin{pmatrix} m_{\ell+1} \\ \vdots \\ m_{\ell+\beta} \end{pmatrix}=C\begin{pmatrix} \eta_1 \\ \vdots \\ \eta_k \end{pmatrix}+
D\begin{pmatrix} \eta_{k+1} \\ \vdots \\ \eta_{k+\beta} \end{pmatrix}.$$
Set $P=\begin{pmatrix} I & 0 \\ C & D \end{pmatrix}$. We have $\tr{P}AP=A$ and 
$P\eta=\begin{pmatrix} \eta_1 \\ \vdots \\ \eta_k \\ m_{\ell+1} \\ \vdots \\ m_{\ell+\beta} \end{pmatrix}$. 

By Proposition \ref{proptech}, there are stabilizations $\tilde{B}$ of $B$ and $\tilde{B}_L$ of $B_L$, and a unimodular matrix $Q$, 
such that $\tr{Q}\tilde{B}_LQ=\tilde{B}$ and $Q\begin{pmatrix} 0 \\ \vdots \\ 0 \\ \eta_1 \\ \vdots \\ \eta_k \end{pmatrix}
=\begin{pmatrix} 0 \\ \vdots \\ 0 \\ m_1 \\ \vdots \\ m_\ell \end{pmatrix}.$

\newcommand{\ALt}{A\hspace{-1ex}\widetilde{\hspace{1ex}(L})}
Set $\tilde{P}=\begin{pmatrix} Q & 0 \\ 0 & I \end{pmatrix}\begin{pmatrix} I & 0 \\ 0 & P \end{pmatrix}$ 
(note that the size of the blocs is different for the two matrices). 
Set $\ALt=\begin{pmatrix} \tilde{B}_L & 0 \\ 0 & 0 \end{pmatrix}$. Then $\tilde{A}=\tr{\tilde{P}}\ALt\tilde{P}$ 
is a stabilization of $A$ and $\b{\eta'}=(0,..,0,\eta_1,..,\eta_{k+\beta})$ is an $\tilde{A}$-presentation of $H_1(M;\Z)$ 
such that $\tilde{P}\b{\eta'}=\b{m'}=(0,..,0,m_1,..,m_{\ell+\beta})$. Up to stabilization, we can assume that $\det(\tilde{P})=1$. 

Perform stabilizations of $L$ in order to obtain a surgery link with linking matrix $\ALt$. Then apply Lemma \ref{lemmaKirby} 
to obtain a surgery link with linking matrix $\tilde{A}$ and associated presentation $\b{\eta'}$ of $H_1(M;\Z)$.
\fin

    \section{Example} \label{secex}

We give in this section an example of a borromean surgery which realizes a non-trivial isomorphism of the first homology group 
of a given 3-manifold, namely the multiplication by $-1$. The method can be applied to any surgery presentation; we consider here 
a presentation given by a non-invertible knot in order to make things more explicit.
    
The knot $9_{32}$ drawn in Figure \ref{fig9_32} is a non-invertible knot, 
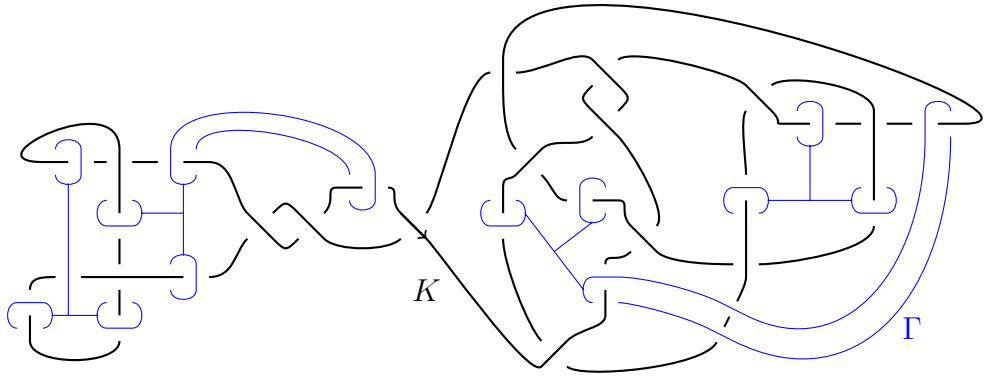
\begin{figure}[htb] 
\begin{center}
\begin{tikzpicture} [scale=0.17]
\draw[thick] 
(30,10) .. controls +(-1,-1) and +(1,-1) .. (24,10) -- (22,12) .. controls +(-1,1) and +(1,1) .. (20,12) 
(18,10) .. controls + (-1,-1) and +(2,0) .. (15,7) (13,7) -- (5,7) (3,7) .. controls +(-1,0) and +(0,1) .. (1,6) 
(1,4) -- (1,2) .. controls +(0,-2) and +(0,-2) .. (8,2) (8,4) -- (8,6) (8,8) -- (8,10) 
(8,12) -- (8,17) .. controls +(0,5) and +(-6,0) .. (2,16) -- (4,16) (6,16) -- (7,16) (9,16) -- (11,16) 
(13,16) -- (15,16) .. controls +(2,0) and +(-1,1) .. (18,12) -- (20,10) .. controls +(1,-1) and +(-1,-1) .. (22,10) 
(24,12) .. controls +(1,1) and +(-1,0) .. (25,14) -- (27,14) (29,14) .. controls +(1,0) and +(-1,1) .. (30,12) -- (32,10)
.. controls +(1,-1) and +(-1,-1) .. (41,0) -- (43,2) .. controls +(1,1) and +(0,-1) .. (46,4) -- (46,6) 
(46,8) .. controls +(0,1) and +(-1,-1) .. (48,9) (50,11) .. controls +(1,1) and +(1,-1) .. (47,18) -- (45,20) 
.. controls +(-1,1) and +(-1,-1) .. (45,22) (47,24) .. controls +(1,1) and +(-1,1) .. (57,22) -- (59,20) 
.. controls +(1,-1) and +(-1,0) .. (60,19) -- (62,19) (64,19) -- (66,19) (68,19) -- (70,19) 
(72,19) -- (74,19) .. controls +(10,0) and +(0,12) .. (38,24) -- (38,22) .. controls +(0,-1) and +(-1,1) .. (39,17) 
(41,15) .. controls +(1,-1) and +(-1,0) .. (43,13) (45,13) -- (47,13) .. controls +(1,0) and +(-1,1) .. (48,11) -- (50,9) 
.. controls +(1,-1) and +(-1,0) .. (56,8) (58,8) .. controls +(2,0) and +(0,-2) .. (67,11) 
(67,13) -- (67,20) .. controls +(0,2) and +(1,1) .. (59,22) (57,20) .. controls +(-0.5,-0.5) and +(0,1) .. (57,15) 
(57,13) -- (57,7) .. controls +(0,-0.5) and +(0.3,0.6) .. (56.3,5.1) (55.7,3.9) -- (55.3,3.1) 
(54.7,1.9) .. controls +(-1,-2) and +(1,-1) .. (43,0) (41,2) .. controls +(-1,1) and +(0,-2) .. (38,10) 
(38,12) -- (38,14) .. controls +(0,1) and +(-0.5,-0.5) .. (39,15) -- (41,17) .. controls +(1,1) and +( -1,-1) .. (45,18) 
(47,20) .. controls +(1,1) and +(1,-1) .. (47,22) -- (45,24) .. controls +(-1,1) and +(2,0) .. (39,23) 
(37,23) .. controls +(-2,0) and +( 1,1) .. (32,12);
\draw[thick,->] (30,12) -- (32,10);
\draw[thick] (32,6) node {$K$};
\draw[blue] 
(2.7,4) -- (6.3,4) (4,4) -- (4,14.3) 
(0,3) .. controls +(-1,0) and +(-1,0) .. (0,5) -- (2,5) .. controls +(1,0) and +(1,0) .. (2,3)
(7,5) .. controls +(-1,0) and +(-1,0) .. (7,3) -- (9,3) .. controls +(1,0) and +(1,0) .. (9,5)
(3,15) .. controls +(0,-1) and +(0,-1) .. (5,15) -- (5,17) .. controls +(0,1) and +(0,1) .. (3,17)
(9.7,12) -- (13,12) (13,8.7) -- (13,14.3)
(7,13) .. controls +(-1,0) and +(-1,0) .. (7,11) -- (9,11) .. controls +(1,0) and +(1,0) .. (9,13)
(12,6) .. controls +(0,-1) and +(0,-1) .. (14,6) -- (14,8) .. controls +(0,1) and +(0,1) .. (12,8)
(14,15) .. controls +(0,-1) and +(0,-1) .. (12,15) -- (12,17) .. controls +(0,5) and +(0,5) .. (28,15) -- (28,13) 
.. controls +(0,-1) and +(0,-1) .. (26,13) (26,15) .. controls +(0,3) and +(0,3) .. (14,17)
(58.7,13) -- (65.3,13) (62,13) -- (62,17.3) 
(56,12) .. controls +(-1,0) and +(-1,0) .. (56,14) -- (58,14) .. controls +(1,0) and +(1,0) .. (58,12)
(66,14) .. controls +(-1,0) and +(-1,0) .. (66,12) -- (68,12) .. controls +(1,0) and +(1,0) .. (68,14)
(61,18) .. controls +(0,-1) and +(0,-1) .. (63,18) -- (63,20) .. controls +(0,1) and +(0,1) .. (61,20)
(39.7,12) -- (42,9) -- (44.3,6) (42,9) -- (45,11.3) 
(37,13) .. controls +(-1,0) and +(-1,0) .. (37,11) -- (39,11) .. controls +(1,0) and +(1,0) .. (39,13)
(46,12) .. controls +(0,-1) and +(0,-1) .. (44,12) -- (44,14) .. controls +(0,1) and +(0,1) .. (46,14)
(45,5) .. controls +(-1,0) and +(-1,0) .. (45,7) -- (47,7) .. controls +(2,0) and +(-2,1) .. (55,5) -- (57,4) 
.. controls +(8,-4) and +(0,-10) .. (71,18) -- (71,20) .. controls +(0,1) and +(0,1) .. (73,20) 
(73,18) .. controls +(0,-12) and +(10,-5) .. (56,2) -- (54,3) .. controls +(-2,1) and +(1,0) .. (47,5);
\draw[blue] (70,3) node {$\Gamma$};
\end{tikzpicture}
\caption{A Y-link inverting the knot $9_{32}$} \label{fig9_32}
\end{center}
\end{figure}
{\em i.e.} the two possible orientations provide non-isotopic oriented knots. 
Let $9_{32}^+$ be the one represented in Figure \ref{fig9_32}, and let $9_{32}^-$ be the other one. 
Let $K\subset S^3$ be the knot $9_{32}^+$ with framing $n>1$. Any knot can be obtain from the unknot with the same framing by borromean surgeries, 
and conversely (see the proof of Proposition \ref{propMuNa}). 
Figure \ref{fig9_32} shows the knot $K$ as the connected sum of an unknot and a $9_{32}^+$. The borromean surgery on the drawn Y-link $\Gamma$ 
modifies the right part by trivializing the $9_{32}^+$, and the left part by transforming the unknot into a $9_{32}^-$. 
Hence it globally changes the $9_{32}^+$ into a $9_{32}^-$. 

Set $M=S^3(K)$. Note that $H_1(M;\Z)\cong\Z/n\Z$ is non-trivial and finite. 
The manifolds $M$ and $M(\Gamma)$ are both obtained from $S^3$ by surgery on a knot $9_{32}$, hence they are homeomorphic. 
However, the oriented meridians of the oriented surgery presentations given by $9_{32}^+$ and $9_{32}^-$ 
have opposite homology classes. Hence the Y-link $\Gamma$ does not modify the manifold $M$, 
up to homeomorphism, but it realizes the non-trivial isomorphism of $H_1(M;\Z)$ given by multiplication by $-1$.

\def\cprime{$'$}
\providecommand{\bysame}{\leavevmode ---\ }
\providecommand{\og}{``}
\providecommand{\fg}{''}
\providecommand{\smfandname}{\&}
\providecommand{\smfedsname}{\'eds.}
\providecommand{\smfedname}{\'ed.}
\providecommand{\smfmastersthesisname}{M\'emoire}
\providecommand{\smfphdthesisname}{Th\`ese}

\end{document}